\documentclass[12pt,twoside]{article}

\usepackage{a4}

\usepackage{amssymb,amsmath,amsthm,latexsym}
\usepackage{amsfonts}
\usepackage{amsfonts}
\usepackage{graphicx}

\numberwithin{subcase}{case}

\usepackage{amsmath, amsfonts}
\usepackage{amssymb, graphicx}
\usepackage{amscd}
\usepackage{textcomp}
\usepackage{palatino}
\usepackage{xcolor}
\usepackage{array}
\usepackage[colorlinks=true,linkcolor=red,citecolor=red]{hyperref}
\allowdisplaybreaks

\newtheorem{theorem}{Theorem}[section]

\newtheorem{corollary}[theorem] {Corollary}

\newtheorem{example}[theorem]{Example}
\newtheorem{lemma} [theorem]{Lemma}

\newcommand{\Z}{{\mathbb Z}}

\newcommand{\Q}{{\mathbb Q}}

\newcommand{\N}{{\mathbb N}}

\newcommand{\F}{{\mathbb F}}

\voffset=-12mm
\mathsurround=2pt
\parindent=12pt
\parskip= 4.5 pt
\lineskip=3pt
\oddsidemargin=10mm
\evensidemargin=10mm
\topmargin=55pt
\headheight=12pt
\footskip=30pt
\textheight 8.1in
\textwidth=150mm
\raggedbottom
\pagestyle{myheadings}
\hbadness = 10000
\tolerance = 10000

\newtheoremstyle{case}{}{}{}{}{}{:}{ }{}
\theoremstyle{case}

\usepackage{indentfirst}

\vspace{5cm}

\begin{document}
	\label{'ubf'}  
	\setcounter{page}{1} 
	\markboth
	{\hspace*{-9mm} \centerline{\footnotesize
			Characterization of Monogenic Number Fields}}
	{\centerline{\footnotesize 
			Characterization of Monogenic Number Fields
		} \hspace*{-9mm}}
	
	\vspace*{-2cm}
	\begin{center}
		
		{\textbf{On characterization of Monogenic number fields associated with certain
		quadrinomials and its applications}\\
			\vspace{.2cm}
			\medskip
			{\sc Tapas Chatterjee}\\
			{\footnotesize  Department of Mathematics,}\\
			{\footnotesize Indian Institute of Technology Ropar, Punjab, India.}\\
			{\footnotesize e-mail: {\it tapasc@iitrpr.ac.in}}
			
			\medskip
			{\sc Karishan Kumar}\\
			{\footnotesize Department of Mathematics, }\\
			{\footnotesize Indian Institute of Technology Ropar, Punjab, India.}\\
			{\footnotesize e-mail: {\it karishan.22maz0012@iitrpr.ac.in}}
			\medskip}
		
	\end{center}
	\thispagestyle{empty} 
	\vspace{-.4cm}
	
	\begin{abstract}  
		{\footnotesize }
		Let $f(x)=x^{n}+ax^{3}+bx+c$ be the minimal polynomial of an algebraic integer $\theta$ over the rationals with certain conditions on $a,~b,~c,$ and $n.$ Let $K=\Q(\theta)$ be a number field and $\mathcal{O}_{K}$ be the ring of integers of $K.$ In this article, we characterize all the prime divisors of the discriminant of $f(x)$ which do not divide the index of $\theta.$ 
		As an interesting corollary, we establish necessary and sufficient conditions for $\Z[\theta]$ to be integrally closed. 
		Finally, we investigate the types of solutions to certain differential equations associated with the polynomial $f(x).$
	\end{abstract}
	\noindent 
	{\textbf{Key words and phrases}: Dedekind criterion; Discriminant; Index of an algebraic integer; Monogenic number fields; Ring of algebraic integers; Differential equations.}\\
		
		\noindent
		{\bf{Mathematics Subject Classification 2020:}} Primary: 11R04, 11R29, 11Y40; Secondary: 11R09, 11R21.
		
		\vspace{-.37cm}
		
		\section{\bf Introduction}
		 Let $\theta$ be an algebraic integer and $f(x)$ be the minimal polynomial of $\theta$ over the field of rational numbers. Let $K=\mathbb{Q}(\theta)$ be an algebraic number field of degree $n.$ Then, the field $K$ is called monogenic if it has an integral basis of the form $\{1, \theta, \theta^{2}, \theta^{3}, \ldots, \theta^{n-1}\}.$ The basis $\{1, \theta, \theta^{2}, \theta^{3}, \ldots, \theta^{n-1}\}$ is known as power basis of $K.$ 
		  An important criterion established by Dedekind in 1878 provides necessary and sufficient conditions for a prime $p$ to divide the index $[\mathcal{O}_{K}:\mathbb{Z}[\theta]]$ (Theorem \ref {T4}), where $\mathcal{O}_{K}$ is the ring of algebraic integers of the number field $K.$ This criterion and the Dedekind's renowned formula $$D_{f}= [\mathcal{O}_{K}:\mathbb{Z}[\theta]]^{2}D_{K},$$ play pivotal role in assessing the monogenity of the number field $K,$ where $D_{f}$ stands for the discriminant of $f(x),$ $D_{K}$ denotes the discriminant of the field $K,$ and $[\mathcal{O}_{K}:\mathbb{Z}[\theta]]$ represents the index of $\mathbb{Z}[\theta]$ in $\mathcal{O}_{K}.$ \\
		\indent In recent years, numerous mathematicians \cite{CK1, le, jss, LJ} have obtained results concerning the monogenity of algebraic number fields associated with trinomials and a particular class of quadrinomials. Lately, the authors \cite{CK1, CK2} have investigated the monogenity of some number fields associated with certain type of quadrinomials of the form $$x^{n}+ax^{n-1}+bx+c\in \mathbb{Z}[x].$$
		
		In this article, we use the Dedekind criterion to characterize all the prime divisors of $[\mathcal{O}_{K}:\mathbb{Z}[\theta]],$ where $\theta$ is the root of the irreducible polynomial $$f(x)=x^{n}+ax^{3}+bx+c\in \mathbb{Z}[x]$$ with $\frac{a}{a-c}=k\in\N$ such that $n=3k>4,$ and $2ab=3ac-bc.$\\ 
		\indent As a consequence of the above criterion, we get necessary and sufficient conditions to verify the associated number field of the polynomial $f(x)$ is monogenic or not, which depends only on $a,~b,~c,$ and $n.$ Equivalently, we verify if the set$\{1, \theta, \theta^{2}, \theta^{3}, \ldots, \theta^{n-1}\}$ is an integral basis  of $K$ or not. Additionally, due to our main Theorem (\ref {th1}), we have an interesting result that describes the structure of the solutions to a class of differential equations that are related to the polynomials of the form $f(x).$ The final section of the article contains some examples which state the importance of the given theorems. In this article, $\bar \nu(x)$ denotes the operation reduction modulo $p$ for any polynomial $\nu(x)$ and rational prime $p.$
		In this regard, we have the following theorem.
		\begin{theorem}\label{th1}
		Let $K=\mathbb{Q}(\theta)$ be a number field and $\theta$ be an algebraic integer with the minimal polynomial $$f(x)=x^{n}+ax^{3}+bx+c$$ over the field $\mathbb{Q},$ where  $\frac{a}{a-c}=k\in\N$ such that $n=3k>4,$ and $2ab=3ac-bc.$ Let $\mathcal{O}_{K}$ be the ring of algebraic integers of $K.$ A prime factor $p$ of the discriminant $D_{f}$ of $f(x)$ does not divide $[\mathcal{O}_{K}:\mathbb{Z}[\theta]]$ if and only if $p$ satisfies one of the following conditions:  
			\begin{enumerate}
				\item When $p|a,$ $p|b$ and $p|c,$ then $p^{2}\nmid c.$
				\item When $p | a,$ $p | b$ and $p\nmid c$ with  $u_{0}=\frac{c+(-c)^{p^{r}}}{p},$ $u_{1}=\frac{b}{p},$ $u_{2}=\frac{a}{p},$ and $p^{r}||n,$ then one of the following conditions is satisfied:\\
				(i) Exactly two elements in the set $\{ u_{0},  u_{1},  u_{2}\}$ are divisible by $p.$\\
				(ii) If $ p\nmid u_{0} u_{1}$ and $p|u_{2},$ then $$(- u_{0})^{n}+(u_{1})^{n}{c}\not \equiv 0\pmod{p}.$$ 
				(iii) If $ p\nmid u_{0} u_{2}$ and $p|u_{1},$ then $${c}(  u_{2})^{\frac {n}{3}}+( -u_{0})^{\frac {n}{3}} \not \equiv 0\pmod{p}.$$
				(iv)  If $ p\nmid u_{1} u_{2}$ and $p|u_{0},$                                                                                                                                                                                                                                                                                                                                                                                                                                                                                                                                                                                                                                                                                                                                                                                                                                                                                                                                                                                                                                                                                                                                                                                                                                                                                                                                                                                                                                                                                                                                                                                                                                                                                                                                                                                                                                                                                                                                                                                                                                                                                                                                                                                                                                                                                                                                                                                                                                                                                                                                                                                                                                                                                                                                                                                                                                                                                                                                                                                                                                                                                                                                                                                                                                                                                                                                                                                                                                                                                                                                                                                                                                                                                                                                                                                                                                                                                                                                                                                                                                                                                                                                                                                                                                                                                                                                                                                                                                                                                                                                                                                                                                                                                                                                                                                                                                                                                                                                                               then $$\left\{\begin{array}{ll}
					{c}( u_{2})^{\frac{n}{2}}+(- u_{1})^{\frac{n}{2}}\not\equiv 0\pmod{p}, & {\rm if }\; 2|n \\
					{c}^{2}( u_{2})^{n}+( u_{1})^{n}\not\equiv 0\pmod{p}, & {\rm if}\;2\nmid n 
				\end{array}\right..$$
				(v) If $ p\nmid (u_{0} u_{1}u_{2}),$ then $$[(-\bar {c})^{\frac {3}{n}}\bar u_{2}+\bar u_{0}]^{3}+(\bar u_{1})^{3}(-\bar c)^{\frac {3}{n}}\neq\bar 0.$$
				
				\item	When $p|a,$ $p\nmid b$ and $p|c$ with $v_{0}=\frac{c}{p},$ $v_{1}=\frac{b+(-b)^{p^{s}}}{p},$ $v_{2}=\frac{a}{p},$ and $p^{s}||(n-1),$ then one of the following conditions is satisfied:
				
				(i) $ p|v_{2}$ and $v_{0}[(- v_{0})^{n-1}+(v_{1})^{n-1}{b}]\not \equiv 0\pmod{p}.$
				 
				(ii) If $ p\nmid v_{0} v_{2}$ and $p|v_{1},$ then $$
				( v_{2})^{n-1} b^{3}+(-v_{0})^{n-1}\not\equiv 0\pmod{p}.$$ 
				 
				(iii) If $ p\nmid (v_{0} v_{1}v_{2}),$ then $$[(-\bar {b})^{\frac {3}{n-1}}\bar v_{2}+\bar v_{0}]^{3}+(\bar v_{1})^{3}(-\bar b)^{\frac {3}{n-1}}\neq\bar 0.$$
				
				\item When $p\nmid a,$ $p|b$ and $p|c,$ with $w_{0}=\frac{c}{p},$ $w_{1}=\frac{b}{p},$ $w_{2}=\frac{a+(-a)^{p^{t}}}{p},$ and $p^{t}||(n-3),$ then one of the following conditions is satisfied:
				
				(i) $ p|w_{2}$ and $w_{0}[(- w_{0})^{n-3}+(w_{1})^{n-3}{a}]\not \equiv 0\pmod{p}.$
				
				(ii) If $ p\nmid w_{0} w_{2}$ and $p|w_{1},$ then $$
				(- w_{0})^{\frac {n-3}{3}}+ {a}(w_{2})^{\frac {n-3}{3}}\not\equiv 0\pmod{p}.$$
				(iii) If $ p\nmid (w_{0} w_{1}w_{2}),$ then $$[(-\bar {a})^{\frac {3}{n-3}}\bar w_{2}+\bar w_{0}]^{3}+(\bar w_{1})^{3}(-\bar a)^{\frac {3}{n-3}}\neq\bar 0.$$
				
				\item When $p|b$ and $p\nmid ac$ with $n=3k=3(3k_{1}+R),$ then one of the following conditions is satisfied:\\
				(i) If $R=1,$ then $9\nmid (a^{2}-1).$
				
				(ii) If $R=2,$ then $$\bigg[~~ \overline {2\bigg(\frac {a^{2}-1}{3}\bigg)}x^{3}+ x^{2}+\overline {(2+ac)}x+\overline {\bigg(\frac {c^{2}-1}{3}\bigg)}\bigg]$$ is co-prime to $\bar f(x).$
				
				\item When $p\nmid abc,$ then one of the following conditions is satisfied:\\
				(i) $(x^{3}+x+1)$ is co-prime to $\bar f(x).$
				
				(ii) If $(x^{3}+x+1)$ is not co-prime to $\bar f(x),$ then either $(x^{3}+x+1)$ is co-prime to $\bar M(x)$ or $\bar M(\zeta)\neq \bar 0,$ where $$M(x)=\frac{1}{p}\bigg(f(x)-\displaystyle\prod_{i=1}^{l_{5}}  q_{i}(x)^{e_{i}}\bigg)$$ given $\bar f(x)=\displaystyle\prod_{i=1}^{l_{5}} (\bar q_{i}(x))^{e_{i}}$ with $\bar q_{i}(x)$ are the distinct monic irreducible polynomial factors of $\bar f(x)$ over the field $\F_{p}$ and $q_{i}(x)$ are  respectively monic lifts, for all $i\in\{1,~2,~\ldots,l_{5}\},$ and $\zeta$ is a common zero of $(x^{3}+x+1)$ and $\bar f(x).$
				
			 \end{enumerate}
		\end{theorem}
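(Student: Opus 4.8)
The plan is to run Dedekind's criterion (Theorem~\ref{T4}) prime by prime, after first recording a polynomial identity that pins down where $\bar f$ can fail to be separable. Writing $d=a-c$, the hypothesis $\frac{a}{a-c}=k$ forces $a=kd$, $c=(k-1)d$, and then $2ab=3ac-bc$, i.e.\ $b(2a+c)=3ac$, forces $b=\frac{3k(k-1)d}{3k-1}$. A direct computation with $n=3k$ then gives the exact identity
$$n\,f(x)-x\,f'(x)=3kc\,(x^{3}+x+1).$$
Since $p\mid D_{f}$ exactly when $\bar f$ is inseparable, i.e.\ $\gcd(\bar f,\bar f')\neq\bar1$, this identity shows that once $p\nmid 3kc$ every repeated root of $\bar f$ in $\overline{\F_{p}}$ is a root of $x^{3}+x+1$; as $k\mid a$, any $p\nmid abc$ has $p\nmid kc$, and for $p=3$ one computes $\bar f'\equiv\bar b\neq\bar0$, so $\bar f$ is then separable. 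This is precisely the mechanism behind case~(6). One also notes that in case~(5) the relation $b(2a+c)=3ac$ with $p\mid b$, $p\nmid ac$ forces $p=3$.

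First I would split according to the divisibility of $a,b,c$ by $p$---this is exactly the list (1)--(6)---and, in each case, write down the factorization type of $\bar f$ over $\F_{p}$. When $p\mid a$ and $p\mid b$ one has $\bar f=x^{n}+\bar c$; writing $n=p^{r}n'$ with $p\nmid n'$ and using ${\bar c}^{\,p^{r}}=\bar c$ with Frobenius, $\bar f=(x^{n'}+\bar c)^{p^{r}}$ with $x^{n'}+\bar c$ separable, so in Dedekind's notation one takes $g(x)=x^{n'}+c$, $h(x)=(x^{n'}+c)^{p^{r}-1}$. The other shapes are analogous: $\bar f=x^{n}$ (case~1); $\bar f=x\,(x^{n_{1}}+\bar b)^{p^{s}}$ with $n-1=p^{s}n_{1}$ (case~3); $\bar f=x^{3}(x^{n_{3}}+\bar a)^{p^{t}}$ with $n-3=p^{t}n_{3}$ (case~4); over $\F_{3}$, using $\bar ax^{3}+\bar c=(\bar ax+\bar c)^{3}$ and $x^{n}=(x^{k})^{3}$, one gets $\bar f=(x^{k}+\bar ax+\bar c)^{3}$ (case~5); and in case~6 the identity above makes $\bar f$ separable unless $\gcd(x^{3}+x+1,\bar f)\neq\bar1$, in which event the common factor is the entire repeated part.

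Next, in each case I would compute $M(x)=\tfrac1p\bigl(g(x)h(x)-f(x)\bigr)$ modulo $p$ and test $\gcd(\bar M,\bar g,\bar h)=\bar1$. In case~1 this is immediate: $\bar M(0)=-c/p$, so $p\nmid[\mathcal O_{K}:\Z[\theta]]$ iff $p^{2}\nmid c$. In case~2, expanding $(x^{n'}+c)^{p^{r}}$ by the binomial theorem, subtracting $f$, dividing by $p$, and reducing modulo $x^{n'}+\bar c$ (so that each $x^{n'j}$ collapses to a constant) gives $\bar M\equiv-(\bar u_{2}x^{3}+\bar u_{1}x+\bar u_{0})\pmod{x^{n'}+\bar c}$ with $u_{0},u_{1},u_{2}$ as stated; hence $p\nmid[\mathcal O_{K}:\Z[\theta]]$ iff $\bar u_{2}x^{3}+\bar u_{1}x+\bar u_{0}$ is coprime to $x^{n'}+\bar c$. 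A common root $\alpha$ satisfies $\alpha^{n'}=-\bar c$ (hence $\alpha^{n}=-\bar c$), and, according to how many of $u_{0},u_{1},u_{2}$ are divisible by $p$, the residual equation is $\alpha=-\bar u_{0}/\bar u_{1}$, or $\alpha^{2}=-\bar u_{1}/\bar u_{2}$, or $\alpha^{3}=-\bar u_{0}/\bar u_{2}$, or the full cubic; eliminating $\alpha$ by means of $\alpha^{n}=-\bar c$ (splitting the quadratic branch by the parity of $n$) turns ``no common root'' into exactly the congruences (i)--(v). Cases~3 and~4 proceed identically with $x^{n_{1}}+\bar b$, resp.\ $x^{n_{3}}+\bar a$, replacing $x^{n'}+\bar c$. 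For cases~5 and~6 one cubes the relevant monic lift (resp.\ forms $\prod q_{i}^{e_{i}}$), subtracts $f$, divides by $p$, and checks coprimality of the resulting polynomial with $\bar f$; the clause $\bar M(\zeta)\neq\bar0$ in 6(ii) is just Dedekind's test localized at the irreducible factor of $\bar f$ through a common zero $\zeta$ of $x^{3}+x+1$ and $\bar f$.

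I expect the main obstacle to be the bookkeeping in cases~2--4: one must separate the generic situation, where the exponent modulus ($n'$, $n_{1}$ or $n_{3}$) exceeds $3$ and the cubic part of $M$ survives reduction intact, from the small-modulus situation, where reducing $x^{3}$ and $x$ modulo $x^{n'}+\bar c$ injects extra constant and linear terms into the effective $u_{0},u_{1}$; and one must verify that each displayed congruence is not merely necessary but also sufficient for a common root in $\overline{\F_{p}}$, which is where the split by the number of vanishing $u_{i}$ and the parity of $n$ does the real work. A secondary point is to check that the hypotheses pin down $k\bmod 3$ in case~5 (using $k\mid a$, $3\nmid a$, and $3\nmid c=(k-1)d$), so that only the relevant sub-cases arise, and, conversely, that whenever none of the conditions in a given case holds the corresponding gcd is genuinely $\neq\bar1$, making the statement an equivalence rather than one implication.
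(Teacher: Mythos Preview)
Your proposal is correct and matches the paper's approach: both run Dedekind's criterion case by case according to which of $a,b,c$ are divisible by $p$, factor $\bar f$ via Frobenius/binomial expansion, compute $\bar M$, and reduce the coprimality test to the displayed congruences by eliminating a putative common root. The paper derives your identity $nf(x)-xf'(x)=nc\,(x^{3}+x+1)$ inside case~(6) rather than stating it upfront, and it works directly with a common root $\eta$ of $\bar f$ and $\bar u_{2}x^{3}+\bar u_{1}x+\bar u_{0}$ (using $\eta^{n}=-\bar c$) rather than reducing modulo $x^{n'}+\bar c$, so your ``small-modulus'' concern does not actually arise.
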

	As a consequence of Theorem (\ref {th1}), we have the following important corollary. 
	\begin{corollary}\label{C1}
		Let $K=\mathbb{Q}(\theta)$ be the number field corresponding to the minimal polynomial $f(x)= x^{n}+ax^{3}+bx+c$ of the algebraic integer $\theta.$ Then, $\mathcal{O}_{K}=\Z[\theta]$ if and only if each prime $p$ dividing the discriminant of $f(x)$ satisfies any one of the conditions (1) to (6) of~ Theorem (\ref{th1}).
	\end{corollary}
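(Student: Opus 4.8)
The plan is to deduce the corollary directly from Theorem \ref{th1} together with Dedekind's index formula $D_f = [\mathcal{O}_K:\Z[\theta]]^{2}D_K$ recalled in the introduction. The first step is to observe that $\mathcal{O}_K = \Z[\theta]$ holds precisely when the index $[\mathcal{O}_K:\Z[\theta]]$ equals $1$, i.e.\ when no rational prime $p$ divides $[\mathcal{O}_K:\Z[\theta]]$. From the index formula, $[\mathcal{O}_K:\Z[\theta]]^{2}$ divides $D_f$, so any prime dividing the index necessarily divides $D_f$; consequently $\Z[\theta]$ is integrally closed in $K$ if and only if no prime factor $p$ of $D_f$ divides $[\mathcal{O}_K:\Z[\theta]]$. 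This reduces the global statement ``$\mathcal{O}_K=\Z[\theta]$'' to a condition on the finitely many prime divisors of $D_f$.

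Next I would invoke Theorem \ref{th1}, which asserts that for a prime factor $p$ of $D_f$ the statement ``$p \nmid [\mathcal{O}_K:\Z[\theta]]$'' is equivalent to ``$p$ satisfies one of the conditions (1)--(6)''. Applying this equivalence to each prime factor of $D_f$ and combining it with the reduction of the previous paragraph gives at once: $\mathcal{O}_K = \Z[\theta]$ if and only if every prime $p \mid D_f$ satisfies one of (1)--(6), which is exactly the assertion of the corollary. Concretely, for the ``only if'' direction one reads the chain of equivalences from left to right; for the ``if'' direction one uses additionally that a prime not dividing $D_f$ cannot divide the index, so verifying the conditions on the prime divisors of $D_f$ already forces $[\mathcal{O}_K:\Z[\theta]]=1$.

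Since the corollary is a repackaging of Theorem \ref{th1}, there is no substantial obstacle in the argument; the only points that need care are that the six cases of Theorem \ref{th1}, which are organized according to the divisibility pattern of $p$ among $a,b,c$, together with the standing hypotheses $\tfrac{a}{a-c}=k\in\N$, $n=3k>4$ and $2ab=3ac-bc$, genuinely exhaust all possible prime divisors of $D_f$ (a fact already contained in the proof of Theorem \ref{th1}), and that Dedekind's formula and criterion (Theorem \ref{T4}) are being applied to the correct discriminant and polynomial. With these in hand, the corollary follows immediately.
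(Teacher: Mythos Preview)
Your proposal is correct and follows essentially the same approach as the paper: both deduce the corollary by combining Theorem~\ref{th1} with the index formula $D_f=[\mathcal{O}_K:\Z[\theta]]^2 D_K$ to reduce the question of $\mathcal{O}_K=\Z[\theta]$ to checking that no prime divisor of $D_f$ divides the index. Your write-up is in fact slightly more explicit than the paper's, but the underlying argument is identical.
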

	
		Further, from the above Theorem (\ref {th1}), we have the following important theorem.
		
		\begin{theorem}\label{th2}
			Let	\begin{align}\label{eq300}
				\frac{d^ny}{dx^n} + a \frac{d^3y}{dx^3}+b\frac{dy}{dx} + c y = 0
			\end{align}
			be a differential equation with integer coefficients, where $\frac{a}{a-c}=k\in\N$ such that $n=3k>4,$ and $2ab=3ac-bc.$ Let $\phi(z)=z^{n}+az^{3}+bz+c$ be the auxiliary polynomial of (\ref {eq300}) which is irreducible with a root $\theta.$ If for each prime $p$ dividing the discriminant $D_{\phi} $ of $\phi (z)$ satisfies any one of the conditions (1) to (6) of~ Theorem (\ref{th1}), then the general solution of the given differential equation (\ref {eq300}) is of the form
			\begin{align}
				\label{eq301}
				y(x)&=
				\sum_{i=1}^{n}c_{i}\displaystyle\prod_{j=1}^{n} e^{a_{j-1}^{(i)}\theta^{j-1}x},
			\end{align}
			where $a_{j-1}^{(i)}$ are integers and $c_{i}$ are arbitrary real constants, for all $i,~j\in \{1,~2,~\ldots,~n\}.$
		\end{theorem}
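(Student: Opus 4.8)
The plan is to deduce \eqref{eq301} by combining the classical solution theory for constant‑coefficient linear ODEs with the monogenity statement of Corollary \ref{C1}.

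\textbf{Step 1: the fundamental system.} Since $\phi(z)=z^{n}+az^{3}+bz+c$ is irreducible over $\Q$ and $\mathrm{char}\,\Q=0$, $\phi$ is separable, so it has $n$ pairwise distinct roots $\theta=\theta_{1},\dots,\theta_{n}\in\C$, each an algebraic integer because $\phi$ is monic over $\Z$. The standard theory then gives that $\{e^{\theta_{1}x},\dots,e^{\theta_{n}x}\}$ is a fundamental system for \eqref{eq300}, so every solution is $y(x)=\sum_{i=1}^{n}c_{i}e^{\theta_{i}x}$; the passage to a real fundamental system by pairing complex‑conjugate roots (so the $c_i$ may be taken real) is routine bookkeeping I would dispatch quickly.

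\textbf{Step 2: the power basis.} By hypothesis every prime $p\mid D_{\phi}$ satisfies one of the conditions (1)--(6) of Theorem \ref{th1}, so Corollary \ref{C1} yields $\mathcal{O}_{K}=\Z[\theta]$, i.e. $\{1,\theta,\dots,\theta^{n-1}\}$ is a $\Z$-basis of $\mathcal{O}_{K}$. Each conjugate $\theta_{i}$ is an algebraic integer lying in $K=\Q(\theta)$, hence $\theta_{i}\in\mathcal{O}_{K}=\Z[\theta]$, and there are unique integers $a_{0}^{(i)},\dots,a_{n-1}^{(i)}$ with $\theta_{i}=\sum_{j=1}^{n}a_{j-1}^{(i)}\theta^{j-1}$. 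Substituting into Step 1 and using $e^{u+v}=e^{u}e^{v}$,
\[
e^{\theta_{i}x}=\exp\!\Bigl(x\sum_{j=1}^{n}a_{j-1}^{(i)}\theta^{j-1}\Bigr)=\prod_{j=1}^{n}e^{a_{j-1}^{(i)}\theta^{j-1}x},
\]
so $y(x)=\sum_{i=1}^{n}c_{i}\prod_{j=1}^{n}e^{a_{j-1}^{(i)}\theta^{j-1}x}$ with $a_{j-1}^{(i)}\in\Z$ and $c_{i}$ arbitrary real constants, which is \eqref{eq301}.

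The step I expect to be the real obstacle is the clause ``$\theta_{i}\in K$'' used in Step 2: a priori the conjugates of $\theta$ lie only in a splitting field of $\phi$, so applying $\mathcal{O}_{K}=\Z[\theta]$ to \emph{every} $\theta_{i}$ requires that $\phi$ split in $K$, i.e. that $K/\Q$ be normal. A complete argument must therefore first extract from the defining relations $\tfrac{a}{a-c}=k$, $n=3k$ and $2ab=3ac-bc$ --- the same relations responsible for the distinguished factor patterns exploited throughout Theorem \ref{th1} --- that all $n$ roots of $f$ already lie in $\Q(\theta)$; granting that, the remainder of the proof is purely formal. (If one only asks for the contribution of the $K$-rational roots, Steps 1--2 go through verbatim for those roots.)
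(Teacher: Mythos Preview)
Your approach is essentially identical to the paper's. The paper's proof of Theorem~\ref{th2} is very short: it invokes the hypothesis on the primes dividing $D_{\phi}$ together with $D_{f}=[\mathcal{O}_{K}:\Z[\theta]]^{2}D_{K}$ to obtain $\mathcal{O}_{K}=\Z[\theta]$, then asserts that ``all the roots of $\phi(z)=0$ must be of the form $a_{0}^{(i)}+a_{1}^{(i)}\theta+\cdots+a_{n-1}^{(i)}\theta^{n-1}$ with $a_{j-1}^{(i)}\in\Z$'' and writes down \eqref{eq301}. This is exactly your Step~1/Step~2 outline, with Step~1 left entirely implicit.

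The obstacle you isolate in your final paragraph---that the conjugates $\theta_{i}$ a priori lie only in a splitting field of $\phi$, and one needs $K/\Q$ to be normal before $\mathcal{O}_{K}=\Z[\theta]$ can be applied to them---is \emph{not} addressed in the paper. The paper passes directly from $\mathcal{O}_{K}=\Z[\theta]$ to the displayed integer expansion of every root without comment, and the arithmetic constraints $n=3k$, $2ab=3ac-bc$ are not used in its proof of Theorem~\ref{th2} beyond their role in making Theorem~\ref{th1} applicable. So your proposal reproduces the paper's argument faithfully, and the point you flag as ``the real obstacle'' is a gap you share with the original; you have simply been more explicit about it.
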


		\section{\bf Notations and Preliminaries}
		In this section, we define some basic notations and results.
		Let $p$ be a prime number and $n$ be any integer such that $p\nmid n,$ then $(\bar n)^{-1}$ denotes the inverse of $ n$ in the field $\F_{p}.$
		We need the following results that play important roles in the proof of Theorem (\ref {th1}). 
		
		\begin{lemma}\label{l1}
		If $n=\frac{3a}{a-c} \in\N $ and $2ab=3ac-bc,$ then $nc=(n-3)a=(n-1)b.$
		\end{lemma}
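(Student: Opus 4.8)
The plan is to derive each of the two equalities directly from the two hypotheses; the statement is purely a matter of algebraic bookkeeping, so no serious machinery is needed. Note first that since $n=\frac{3a}{a-c}\in\N$ with $n>4$, we necessarily have $a\neq 0$ and $a\neq c$, which legitimises the cancellations below.

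For the equality $nc=(n-3)a$, I would simply clear denominators in $n=\frac{3a}{a-c}$ to obtain $n(a-c)=3a$, i.e.\ $na-nc=3a$, and then rearrange to $nc=na-3a=(n-3)a$. This step uses only the first hypothesis.

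For the equality $(n-3)a=(n-1)b$ --- equivalently, in view of the previous step, $nc=(n-1)b$, i.e.\ $n(b-c)=b$ --- I would invoke $2ab=3ac-bc$. Rewriting this hypothesis as $3ab-3ac=ab-bc$, that is $3a(b-c)=b(a-c)$, and then multiplying both sides by $n$ and substituting $n(a-c)=3a$ from the first step, the right-hand side becomes $b\cdot n(a-c)=3ab$, so $3a\cdot n(b-c)=3ab$; cancelling $3a$ gives $n(b-c)=b$, hence $nc=nb-b=(n-1)b$, as desired. Equivalently, one may observe that $2ab=3ac-bc$ is precisely the statement $\frac{3a}{a-c}=\frac{b}{b-c}$, so that $n$ equals $\frac{b}{b-c}$ as well, and $n(b-c)=b$ is then immediate.

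The only point that needs a moment's attention is the non-vanishing of $a$ (and of $a-c$) used in the cancellations, and this is forced by the hypothesis that $n=3a/(a-c)$ is a natural number exceeding $4$; beyond that there is no real obstacle, and the chain $nc=(n-3)a=(n-1)b$ follows.
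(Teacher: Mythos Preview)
Your proof is correct and follows essentially the same approach as the paper: both derive $nc=(n-3)a$ immediately from $n(a-c)=3a$, and then obtain $(n-3)a=(n-1)b$ by combining this with the second hypothesis via straightforward algebra. Your rearrangement $3a(b-c)=b(a-c)$ followed by substitution of $n(a-c)=3a$ is slightly slicker than the paper's route of plugging $c=\tfrac{(n-3)a}{n}$ into $2ab=3ac-bc$ and simplifying, but the two arguments are equivalent in spirit and effort.
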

		\begin{proof}
			Let $n=\frac{3a}{a-c}.$ Then, we have 
			\begin{equation}\label{eq1}
			n(a-c)=3a~~ \text {if and only if} ~~	nc=(n-3)a.
			\end{equation}
		Substituting $c=\frac{(n-3)a}{n}$ in $2ab=3ac-bc,$ we get
			\begin{align}\label{eq2}
			(3a-b)\frac{(n-3)a}{n}=2ab&~~ \text {if and only if} ~~	(3a-b)\bigg(1-\frac{3}{n}\bigg)=2b\nonumber \\&~~ \text {if and only if} ~~a\bigg(1-\frac{3}{n}\bigg)=b-\frac{b}{n}\nonumber\\&~~ \text {if and only if} ~~a\bigg(\frac{n-3}{n}\bigg)=b\bigg(\frac{n-1}{n}\bigg) \nonumber
			\\&~~ \text {if and only if}~~(n-3)a=(n-1)b.
			\end{align}
		By using equations (\ref{eq1}) and (\ref{eq2}), we obtain $nc=(n-1)b=(n-3)a.$
		\end{proof}
		The following lemma outlines the existence of primes under some special conditions.

		\begin{lemma}\label{l2}
			Let $k=\frac{a}{a-c}\in\N$ such that $n=3k,$ and $2ab=3ac-bc.$ Then the following results hold:  
				\begin{enumerate}
			\item There does not exist any prime $p$ which satisfies at least one of the following conditions:
			  
			(i)$p|a$ and $p\nmid bc$
			 
			 (ii) $p|c$ and $p\nmid ab.$
			\item The only possible prime which satisfies the conditions $p|b$ and $p\nmid ac$ is $3.$ In addition, if $\frac {a}{a-c}\in\N,$ then $3||n.$
			
				\end{enumerate}
	\end{lemma}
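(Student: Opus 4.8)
The plan is to combine Lemma~\ref{l1} with the defining relation $2ab=3ac-bc$ itself. Since $k=\frac{a}{a-c}\in\N$ and $n=3k$, we have $n=\frac{3a}{a-c}\in\N$, so Lemma~\ref{l1} applies and yields $nc=(n-3)a=(n-1)b$. I would first record that $a,b,c$ are all nonzero: $a\neq 0$ because $\frac{a}{a-c}\in\N$; $c\neq 0$ because $c=0$ would make $f(x)=x\bigl(x^{n-1}+ax^{2}+b\bigr)$ reducible; and $b\neq 0$ because $b=0$ forces $3ac=0$ in $2ab=3ac-bc$. The uniform mechanism is then: a hypothesis ``$p\mid X$ and $p\nmid YZ$'' with $\{X,Y,Z\}=\{a,b,c\}$, fed into $nc=(n-3)a=(n-1)b$, forces $p$ to divide two of the three quantities $n$, $n-1$, $n-3$, hence to divide $1$, $2$, or $3$; any surviving small prime is then removed using $2ab=3ac-bc$.

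For part~(1)(i), assume $p\mid a$ and $p\nmid bc$. Then $p\mid(n-3)a$, so the equalities $(n-3)a=nc$ and $(n-3)a=(n-1)b$ give $p\mid nc$ and $p\mid(n-1)b$; since $p\nmid c$ and $p\nmid b$ this gives $p\mid n$ and $p\mid n-1$, hence $p\mid 1$, a contradiction. For part~(1)(ii), assume $p\mid c$ and $p\nmid ab$. Then $p\mid nc=(n-3)a$ and $p\mid nc=(n-1)b$, and since $p\nmid a$, $p\nmid b$ we get $p\mid n-3$ and $p\mid n-1$, whence $p\mid 2$, i.e. $p=2$. To finish, I would rewrite the relation as $2ab=c(3a-b)$: here $a,b$ are odd and $c$ is even, so $3a-b$ is even and nonzero (else $2ab=0$), making the $2$-adic valuation of the right side at least $2$ while that of $2ab$ is exactly $1$, a contradiction. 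Hence no prime satisfies (i) or (ii).

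For part~(2), assume $p\mid b$ and $p\nmid ac$. Then $p\mid(n-1)b=nc$ and $p\mid(n-1)b=(n-3)a$, and since $p\nmid c$, $p\nmid a$ we get $p\mid n$ and $p\mid n-3$, whence $p\mid 3$, i.e. $p=3$. For the additional claim, $3\mid n$ is immediate from $n=3k$, so it remains to show $3\nmid k$. The identity $k(a-c)=a$ rearranges to $a(k-1)=kc$; if $3\mid k$ then $3\mid a(k-1)$, and since $3\nmid a$ (as $p=3$ and $3\nmid ac$) we would need $3\mid k-1$, incompatible with $3\mid k$. Hence $3\nmid k$, so $9\nmid n$ and $3||n$.

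The only step that is not a one-line divisibility chase is the elimination of $p=2$ in part~(1)(ii): it is invisible to Lemma~\ref{l1} and genuinely needs the quadratic relation $2ab=3ac-bc$ via the $2$-adic valuation count, so that is where I expect the main (modest) difficulty to lie, alongside the preliminary bookkeeping that $n=\frac{3a}{a-c}$ (so Lemma~\ref{l1} is in force) and that $a,b,c\neq 0$ (so the cancellations and valuations are legitimate).
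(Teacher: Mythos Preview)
Your proof is correct and follows essentially the same route as the paper: both use Lemma~\ref{l1} to reduce each case to $p$ dividing two of $n$, $n-1$, $n-3$, eliminate $p=2$ in (1)(ii) via the parity/$2$-adic count on $2ab=c(3a-b)$, and rule out $9\mid n$ in (2) via $(k-1)a=kc$. The only cosmetic difference is your explicit bookkeeping that $a,b,c\neq 0$; note that $c\neq 0$ already follows from $nc=(n-3)a$ with $a\neq 0$ and $n>4$, so invoking irreducibility of $f$ there is unnecessary.
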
	
\begin{proof}
	\begin{enumerate}
	\item If $n=\frac{3a}{a-c} \in\N $ and $2ab=3ac-bc,$ then using Lemma (\ref {l1}), we have
		\begin{align}\label{eq3}
	 nc=(n-1)b=(n-3)a.
	\end{align}
(i) Let $p|a,$ then from equation (\ref {eq3}), we get $p|nc$ and $p|(n-1)b.$ Now, if $p\nmid bc,$ then $p|n$ and $p|(n-1)$ which is a contradiction to the fact that a prime number cannot divide two consecutive integers. Thus, there is no such prime exists. 

(ii) Now, if $p|c$ and $p\nmid ab,$ then from equation (\ref {eq3}), we get $p|(n-1)$ and $p|(n-3)$ which implies that $p|[(n-1)-(n-3)]$ i.e. $p|2.$ Thus, $p=2.$ As $p=2$ and $p\nmid ab,$ implies that $a$ and $b$ are odd numbers. This gives us $4|c(3a-b)$ i.e. $4|2ab$ (since $2ab =3ac-bc$) which is not possible because $2\nmid ab.$ 

 This completes the proof of (1).
\item
 If $p|b$ and $p\nmid ac,$ then by using equation (\ref {eq3}), we obtain $p|n$ and $p|(n-3)$ which implies that $p|[n-(n-3)]$ i.e. $p|3.$ Hence, $p=3.$ Again, from equation (\ref {eq3}), we have $nc=(n-3)a$ i.e. $a=\frac{n}{n-3}c.$ Let $n=3k,$ then $\frac{n}{n-3}=\frac{k}{k-1},$ where $k\in\N.$ This gives us that if $9|n,$ then $3|a$ which is a contradiction to the fact that $p\nmid ac.$ This completes the proof of the second part.
 
	\end{enumerate}
\end{proof}
The following lemma plays a crucial role in establishing our main theorem.

\begin{lemma}\label{l3}
	Let $A(x)=x^{q}+d\in\Z[x]$ be any polynomial of degree $q,$ where $q=p^{k}m,$ $p\nmid md$ for some prime $p,$ and $k,~m\in\N.$ Then, there exist two polynomial functions $B(x)$ and $C(x)$ such that $$A(x)= \bigg(\displaystyle\prod_{i=1}^{l} g_{i}(x)\bigg)^{p^{k}}+p\bigg(\displaystyle\prod_{i=1}^{l} g_{i}(x) \bigg)
	B(x)+p^{2}C(x)+(d+(-d)^{p^{k}}),$$ where polynomials $g_{i}(x)$ are monic lifts of the polynomials $\bar g_{i}(x),$ which are the distinct monic irreducible polynomial factors of $x^{m}+\bar {d}$ over the field $\F_{p},$ for all $i=1,~~2,\ldots,l.$
	\end{lemma}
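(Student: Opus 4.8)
The plan is to first reduce $A$ modulo $p$ in order to recognize the factor $\prod_{i}g_i$, and then refine the analysis modulo $p^2$ to pin down the correction polynomials $B$ and $C$; the delicate point is keeping track of everything to precision $p^2$ and separating the case $p=2$.

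First I would reduce modulo $p$. Since $q=p^k m$ and $\bar d^{\,p^k}=\bar d$ in $\F_p$, the Frobenius identity in $\F_p[x]$ gives $\bar A(x)=x^{p^k m}+\bar d=(x^m+\bar d)^{p^k}$. The polynomial $x^m+\bar d$ has derivative $\bar m\,x^{m-1}$, which is nonzero since $p\nmid m$ and coprime to $x^m+\bar d$ since $\bar d\neq\bar 0$; hence $x^m+\bar d$ is separable, so $x^m+\bar d=\prod_{i=1}^{l}\bar g_i(x)$ with distinct monic irreducibles $\bar g_i$. Put $G(x)=\prod_{i=1}^{l}g_i(x)\in\Z[x]$, a monic polynomial of degree $m$ with $\bar G(x)=x^m+\bar d$. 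Then $\bar A=\bar G^{\,p^k}$, so $A(x)-G(x)^{p^k}=p\,H(x)$ for some $H\in\Z[x]$.

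Next I would compute $A-G^{p^k}$ modulo $p^2$. Writing $G=(x^m+d)+pR$ with $R\in\Z[x]$ and expanding binomially, every term of $G^{p^k}-(x^m+d)^{p^k}$ is divisible by $p^2$ (the $j=1$ term carries the factor $p^{k+1}$, and $k\geq 1$), so $G^{p^k}\equiv(x^m+d)^{p^k}\pmod{p^2}$ regardless of the choice of monic lifts. Since $\binom{p^k}{j}\equiv 0\pmod p$ for $1\leq j\leq p^k-1$, we may write $(x^m+d)^{p^k}=x^q+d^{p^k}+p\,P(x^m)$ with $P(y)=\tfrac1p\big((y+d)^{p^k}-y^{p^k}-d^{p^k}\big)\in\Z[y]$, hence
\[
A(x)-G(x)^{p^k}\equiv \big(d-d^{p^k}\big)-p\,P(x^m)\pmod{p^2}.
\]

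Finally I would isolate the constant. For odd $p$, $d-d^{p^k}=d+(-d)^{p^k}=p\,u_0$; for $p=2$, $d-d^{p^k}=\big(d+(-d)^{p^k}\big)-2d^{2^k}=p\,u_0-2d^{2^k}$. In both cases $d-d^{p^k}=p(u_0+\delta)$ with $\delta\in\Z$ ($\delta=0$ for odd $p$, $\delta=-d^{2^k}$ for $p=2$), so the displayed congruence gives $H\equiv u_0+\delta-P(x^m)\pmod p$, i.e.\ $\bar H-\bar u_0=\bar\delta-\bar P(x^m)$ in $\F_p[x]$. Reducing this modulo $\bar G=x^m+\bar d$ (substitute $x^m\equiv-\bar d$) yields $\bar H-\bar u_0\equiv\bar\delta-\bar P(-\bar d)\pmod{\bar G}$, and a direct substitution into $P$ shows $p\,P(-d)=-(-d)^{p^k}-d^{p^k}$, which is $0$ for odd $p$ and $-2d^{2^k}$ for $p=2$; thus $P(-d)=\delta$ and $\bar P(-\bar d)=\bar\delta$. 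Hence $\bar G\mid(\bar H-\bar u_0)$, say $\bar H-\bar u_0=\bar G\,\bar B$ in $\F_p[x]$; lifting $\bar B$ to $B\in\Z[x]$ makes $H-u_0-GB$ divisible by $p$, say $H-u_0-GB=pC$ with $C\in\Z[x]$. Then $A-G^{p^k}=pH=p\,u_0+pGB+p^2C=(d+(-d)^{p^k})+pGB+p^2C$, which is exactly the claimed identity since $G=\prod_{i=1}^{l}g_i$. The main obstacle is the second step: one must resist stopping at a mod-$p$ computation and instead track everything to precision $p^2$, verify that the particular monic lifts are immaterial at that level, and handle $p=2$ separately, where $(-d)^{p^k}=d^{p^k}$ forces the extra term $\delta$.
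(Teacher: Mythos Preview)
Your proof is correct, but it follows a different route from the paper's. The paper writes $x^{m}+d=\prod_{i}g_{i}(x)+pU(x)$, substitutes $x^{m}=G+pU-d$ (with $G=\prod_i g_i$) directly into $A(x)=(x^{m})^{p^{k}}+d$, and performs a single two-stage binomial expansion of $(G+pU-d)^{p^{k}}+d$: first peeling off the $(pU)$-terms (all divisible by $p^{2}$ since $k\ge 1$), then expanding $(G-d)^{p^{k}}$ to obtain $G^{p^{k}}$, a $pG$-multiple coming from $\sum_{r=1}^{p^{k}-1}\binom{p^{k}}{r}G^{p^{k}-r}(-d)^{r}$, and the constant $(-d)^{p^{k}}+d$. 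This gives explicit $B$ and $C$ in one sweep, uniformly in $p$, with no case distinction.

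Your argument instead works congruentially: you pass to $H=(A-G^{p^{k}})/p$, compute it modulo $p$ via $(x^{m}+d)^{p^{k}}$, and then show $\bar G\mid(\bar H-\bar u_{0})$ by the substitution $x^{m}\mapsto-\bar d$ and the evaluation $P(-d)=\delta$, before lifting. This is sound and arguably more conceptual (it isolates exactly the divisibility one needs for Dedekind's criterion), but it costs you the auxiliary polynomial $P$, the verification $P(-d)=\delta$, and the $p=2$ case split—none of which the paper's direct expansion requires. If you rewrite $G=x^{m}+d-pU$ and expand $(G-d+pU)^{p^{k}}$ instead, you recover the paper's one-line argument and the parity split disappears.
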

\begin{proof}
	Let $q=p^{k}m$ and $p^{k}||q,$ $k\in\N.$ Consider, $A(x)=x^{q}+d=x^{p^{k}m}+d.$ 
Let $D(x)=x^{m}+d$ and applying reduction modulo $p$ operation on $D(x),$ we have $\bar D(x)=x^{m}+\bar d.$ On differentiating $\bar D(x),$ we have $\bar D'(x)=mx^{m-1}.$ As we know, a polynomial has repeated zero if and only if its derivative vanishes at that zero. Consequently, $\bar D(x)$ has no repeated zeros. Let $\displaystyle\prod_{i=1}^{l} \bar g_{i}(x)$ be the factorization of $x^{m}+\bar {d}$ over the field $\F_{p},$ where $\bar g_{i}(x)$ are distinct monic irreducible polynomials and $g_{i}(x)$ are respectively monic lifts. We can write 
\begin{align}
	\label{eq7}
	x^{m}+d= \displaystyle\prod_{i=1}^{l} g_{i}(x)+pU(x),
\end{align}
for some $U(x)\in \mathbb{Z}[x].$ 
  From equation (\ref {eq7}), substituting the value of $x^{m}$ in $A(x),$ we obtain
  \begin{align}
  	\label{eq8}
  	A(x)=x^{p^{k}m}+d= \bigg(\displaystyle\prod_{i=1}^{l} g_{i}(x)+pU(x)-d\bigg)^{p^{k}}+d.
  \end{align}
Using binomial theorem, we have from equation (\ref{eq8}),
\begin{align}
	\label{eq9}
	A(x)&= \bigg(\displaystyle\prod_{i=1}^{l} g_{i}(x)+pU(x)-d\bigg)^{p^{k}}+d\nonumber\\ &=
	\sum_{r=0}^{p^{k}}\binom {p^{k}}{r}\bigg(\displaystyle\prod_{i=1}^{l} g_{i}(x)-d\bigg)^{p^{k}-r}[pU(x)]^{r}+d\nonumber\\&=\bigg(\displaystyle\prod_{i=1}^{l} g_{i}(x)-d\bigg)^{p^{k}}+\sum_{r=1}^{p^{k}}\binom {p^{k}}{r}\bigg(\displaystyle\prod_{i=1}^{l} g_{i}(x)-d\bigg)^{p^{k}-r}[pU(x)]^{r}+d\nonumber\\&=\sum_{r=0}^{p^{k}}\binom {p^{k}}{r}\bigg(\displaystyle\prod_{i=1}^{l} g_{i}(x)\bigg)^{p^{k}-r}(-d)^{r}+p^{2}C(x)(\text {say})+d\nonumber\\&=\bigg(\displaystyle\prod_{i=1}^{l} g_{i}(x)\bigg)^{p^{k}}+\sum_{r=1}^{p^{k}-1}\binom {p^{k}}{r}\bigg(\displaystyle\prod_{i=1}^{l} g_{i}(x)\bigg)^{p^{k}-r}(-d)^{r}+(-d)^{p^{k}}+p^{2}C(x)+d\nonumber\\&=\bigg(\displaystyle\prod_{i=1}^{l} g_{i}(x)\bigg)^{p^{k}}+p\bigg(\displaystyle\prod_{i=1}^{l} g_{i}(x) \bigg)
	B(x)(\text {say})+p^{2}C(x)+(-d)^{p^{k}}+d,
\end{align}
where $B(x)$ and $C(x)$ contains other remaining terms. This completes the proof.
\end{proof}
In 1878, Dedekind introduced a notable criterion known as the Dedekind criterion (\cite{HC}, Theorem 6.1.4; \cite{RD}). This criterion provides the necessary and sufficient conditions that the polynomial $f(x)$ must satisfy to ensure that a prime number $p$ does not divide the index $[\mathcal{O}_{K}:\mathbb{Z}[\theta]].$
	
		\begin{theorem}\label{T4}
			(Dedekind Criterion) Let $\theta$ be an algebraic integer and $f(x)$ be the minimal polynomial of $\theta$ over $\mathbb{Q}.$ Let $K=\mathbb{Q}(\theta)$ be the corresponding number field. Let $p$ be a prime and $$\bar{f}(x)=\bar {f}_1(x)^{a_1}\bar {f}_2(x)^{a_2}\cdots\bar {f}_t(x)^{a_t}$$ be the factorization of $\bar{f}(x)$ as a product of powers of distinct monic irreducible polynomials over the field $\F_{p}.$ Let $M(x)$ be the polynomial defined as $$M(x) = \frac{1}{p}(f(x)-{f}_1(x)^{a_1} {f}_2(x)^{a_2} \cdots {f}_t(x)^{a_t})\in\mathbb{Z}[x],$$  where $f_i(x)\in \mathbb{Z}[x]$ are monic lifts of $\bar f_i(x),$ for all $i=1,~2,\cdots,t.$ Then, a prime $p \nmid [\mathcal{O}_{K}:\mathbb{Z}[\theta]]$ if and only if for each $i,$ we have either $a_i=1$ or $\bar{f_i}(x)$ does not divide $\bar{M}(x).$   
		\end{theorem}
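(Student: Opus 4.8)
This is the classical criterion of Dedekind, so the proposal is to reconstruct the standard argument. The overall plan is to convert the divisibility $p\mid[\mathcal{O}_{K}:\mathbb{Z}[\theta]]$ into the existence of a single explicit algebraic integer lying outside $\mathbb{Z}[\theta]$, and then to encode the existence of that element as a divisibility statement in $\F_{p}[x]$.

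First I would use that $\mathcal{O}_{K}/\mathbb{Z}[\theta]$ is a finite abelian group: by Cauchy's theorem, $p\mid[\mathcal{O}_{K}:\mathbb{Z}[\theta]]$ if and only if there is some $\alpha\in\mathcal{O}_{K}\setminus\mathbb{Z}[\theta]$ with $p\alpha\in\mathbb{Z}[\theta]$. Writing $p\alpha=v(\theta)$ with $v\in\mathbb{Z}[x]$ and $\deg v<n$, the condition $\alpha\notin\mathbb{Z}[\theta]$ is exactly $\bar v\neq\bar 0$. Clearing denominators in the monic integral equation satisfied by $\alpha$ and reducing modulo $p$ shows $\bar v^{m}\equiv\bar 0\pmod{\bar f}$ for some $m$, i.e.\ $\bar v$ is nilpotent in $\mathbb{Z}[\theta]/p\mathbb{Z}[\theta]\cong\F_{p}[x]/(\bar f)$. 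Since the nilradical of this ring is generated by the image of $g(\theta)$, where $g:=f_{1}f_{2}\cdots f_{t}$ is the monic lift of $\operatorname{rad}(\bar f)=\bar f_{1}\cdots\bar f_{t}$, this forces $\bar g\mid\bar v$. Writing $v\equiv gw\pmod p$ and subtracting the resulting element of $\mathbb{Z}[\theta]$, one may replace $\alpha$ by
\[
\alpha=\frac{g(\theta)\,w(\theta)}{p},\qquad w\in\mathbb{Z}[x].
\]

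Next I would set $h:=f_{1}^{a_{1}-1}\cdots f_{t}^{a_{t}-1}$, so that $\bar f=\bar g\,\bar h$ and $f\equiv gh\pmod p$; then $g(\theta)h(\theta)=-pN(\theta)$ in $\mathbb{Z}[\theta]$ for $N:=\frac{1}{p}(f-gh)\in\mathbb{Z}[x]$, and one checks $\gcd(\bar N,\bar g,\bar h)=\gcd(\bar M,\bar g,\bar h)$ for the polynomial $M$ of the statement (the two lifts $gh$ and $\prod_{i}f_{i}^{a_{i}}$ of $\bar f$ differ by a multiple of $p$, and that difference does not affect a gcd already built from $\bar g$ and $\bar h$). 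With this normalization, the non-membership $\alpha\notin\mathbb{Z}[\theta]$ becomes $\bar h\nmid\bar w$ after reducing $g(\theta)w(\theta)$ modulo $f$, while integrality of $\alpha$ — analysed via its characteristic polynomial together with the relation $g(\theta)h(\theta)=-pN(\theta)$ — forces a nontrivial common factor of $\bar g$, $\bar h$ and $\bar M$. Assembling both implications gives: a witness $\alpha$ exists if and only if $\gcd(\bar M,\bar g,\bar h)\neq\bar 1$ in $\F_{p}[x]$.

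Finally I would translate this into the stated form. Every $\bar f_{i}$ divides $\bar g=\bar f_{1}\cdots\bar f_{t}$, whereas $\bar f_{i}\mid\bar h=\prod_{j}\bar f_{j}^{a_{j}-1}$ precisely when $a_{i}\geq 2$; hence $\gcd(\bar M,\bar g,\bar h)\neq\bar 1$ if and only if some index $i$ satisfies $a_{i}\geq 2$ and $\bar f_{i}\mid\bar M$. Negating via the first step, $p\nmid[\mathcal{O}_{K}:\mathbb{Z}[\theta]]$ if and only if for every $i$ one has $a_{i}=1$ or $\bar f_{i}\nmid\bar M$, which is the assertion. The step I expect to be the main obstacle is the middle one: showing that the obstruction is \emph{exactly} the nontriviality of $\gcd(\bar M,\bar g,\bar h)$ requires careful control of the powers of $p$ appearing when $g(\theta)w(\theta)/p$ is reduced modulo $f$ and tested for integrality, together with the verification that the particular lift $\prod_{i}f_{i}^{a_{i}}$ used in the statement yields the same condition as the textbook choice $gh$. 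A more structural route that bypasses this bookkeeping is to work with the radical ideal $I=\sqrt{p\,\mathbb{Z}[\theta]}=\bigl(p,\,g(\theta)\bigr)$ and the one-step enlargement $(I:I)=\{x\in K:xI\subseteq I\}$: $\mathbb{Z}[\theta]$ is $p$-maximal if and only if $(I:I)=\mathbb{Z}[\theta]$, and computing $(I:I)$ directly from $g(\theta)h(\theta)=-pN(\theta)$ recovers the same gcd condition, relocating the difficulty to the proof of that (standard) enlargement lemma.
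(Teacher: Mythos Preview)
The paper does not prove Theorem~\ref{T4} at all: it is stated as the classical Dedekind criterion, with citations to Cohen's book (\cite{HC}, Theorem~6.1.4) and Dedekind's original paper (\cite{RD}), and is then used as a black box throughout the proof of Theorem~\ref{th1}. So there is no ``paper's own proof'' to compare your proposal against.

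That said, your reconstruction is the standard textbook argument (essentially the one in Cohen). The decomposition $\bar f=\bar g\,\bar h$ with $g=\prod_i f_i$ and $h=\prod_i f_i^{a_i-1}$, the reduction of $p$-maximality to the triviality of $\gcd(\bar M,\bar g,\bar h)$, and the alternative phrasing via the ring of multipliers $(I:I)$ of the radical $I=\sqrt{p\,\mathbb{Z}[\theta]}$ are exactly the ingredients one finds in the literature. You are right to flag the middle step as the delicate one: the forward direction (a witness $\alpha$ forces a common factor) is the part that needs the most care, and your sketch compresses it considerably. The remark that replacing the lift $gh$ by $\prod_i f_i^{a_i}$ does not change $\gcd(\bar M,\bar g,\bar h)$ is correct and worth keeping, since the statement in the paper uses the latter lift while most textbook proofs use the former. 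If you were to flesh this out, the cleanest route is the $(I:I)$ approach you mention at the end, which is how Cohen organizes it; the explicit-element bookkeeping you outline first can be made to work but is more error-prone.
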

	Now, we present a lemma that generalizes the first part of Theorem (\ref{th1}).
		
		\begin{lemma}\label{L5}
			Let $\theta$ be an algebraic integer, $n\geq 2$ be any integer, and $$f(x)= x^{n}+ c_{n-1}x^{n-1}+ c_{n-2}x^{n-2}+\cdots+c_{1}x+ c_{0},$$ be the minimal polynomial of $\theta$ over $\mathbb{Q}.$ Let $K=\mathbb{Q}(\theta)$ be the corresponding number field. Let $p$ be a prime number which divides $c_{i},$ for all $i= 0, 1, 2,\ldots,(n-1).$ Then, $p\nmid [\mathcal{O}_{K}:\mathbb{Z}[\theta]]$ if and only if $p^{2}\nmid  c_{0}.$ 
		\end{lemma}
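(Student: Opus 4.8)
The plan is to derive this from the Dedekind criterion (Theorem \ref{T4}), exploiting the fact that the hypothesis $p\mid c_i$ for \emph{every} $i$ makes the mod-$p$ factorization of $f$ as simple as possible. First I would reduce $f(x)$ modulo $p$: since all of $c_0,c_1,\dots,c_{n-1}$ are divisible by $p$ and the leading coefficient is $1$, we get $\bar f(x)=x^{n}$ in $\F_p[x]$. Thus, in the notation of Theorem \ref{T4}, there is a single distinct irreducible factor, $\bar f_1(x)=x$, occurring with multiplicity $a_1=n$, and we may take the monic lift $f_1(x)=x$.

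Next I would write down the auxiliary polynomial
\[
M(x)=\frac{1}{p}\bigl(f(x)-x^{n}\bigr)=\frac{1}{p}\bigl(c_{n-1}x^{n-1}+\cdots+c_1x+c_0\bigr),
\]
which lies in $\mathbb{Z}[x]$ precisely because $p$ divides each $c_i$; its reduction is $\bar M(x)=\overline{(c_{n-1}/p)}\,x^{n-1}+\cdots+\overline{(c_1/p)}\,x+\overline{(c_0/p)}$ over $\F_p$. Since $n\ge 2$, the multiplicity $a_1=n$ is never $1$, so the Dedekind criterion reduces to the single condition that $\bar f_1(x)=x$ does not divide $\bar M(x)$. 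Now $x\mid \bar M(x)$ in $\F_p[x]$ if and only if the constant term $\overline{(c_0/p)}$ vanishes in $\F_p$, i.e. if and only if $p\mid (c_0/p)$, i.e. if and only if $p^{2}\mid c_0$. Negating gives exactly the asserted equivalence: $p\nmid[\mathcal{O}_K:\mathbb{Z}[\theta]]$ if and only if $p^{2}\nmid c_0$.

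I do not expect a genuine obstacle here; the proof is short once $\bar f(x)$ and $M(x)$ are written out. The two points that warrant a line of justification are (a) that $M(x)$ has integer coefficients — this is precisely where the full hypothesis ``$p$ divides every $c_i$'', rather than just $c_0$, is needed — and (b) that for $n\ge 2$ the exponent $a_1=n$ really forces us into the ``$\bar f_i\nmid\bar M$'' alternative of Theorem \ref{T4} instead of the trivial $a_i=1$ case. Everything else is a direct reading of the constant term of $\bar M(x)$.
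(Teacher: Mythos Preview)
Your proposal is correct and follows essentially the same argument as the paper's own proof: reduce $f$ modulo $p$ to get $\bar f(x)=x^n$, form $M(x)=\frac{1}{p}(f(x)-x^n)$, and apply the Dedekind criterion to conclude that $p\nmid[\mathcal{O}_K:\mathbb{Z}[\theta]]$ iff $x\nmid \bar M(x)$ iff $p^2\nmid c_0$. Your write-up is in fact slightly more explicit than the paper's in justifying why $M(x)\in\mathbb{Z}[x]$ and why $n\ge2$ is needed.
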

		\begin{proof}
			Let $p|c_{i},$ for all $i= 0, 1,2, \ldots,(n-1),$ where $p$ be any prime number. Then, $$f(x)= x^{n}+ c_{n-1}x^{n-1}+ c_{n-2}x^{n-2}+\cdots+c_{1}x+ c_{0} \equiv x^{n} \pmod{p}$$ which implies that $$\bar {f}(x)=x^{n}\in \F_{p}[x].$$ Since $n\geq 2,$ by Dedekind criterion, $p\nmid [\mathcal{O}_{K}:\mathbb{Z}[\theta]]$ if and only if $x$ does not divide $\bar {M}(x),$ where $$M(x)=\frac{c_{n-1}x^{n-1}+ c_{n-2}x^{n-2}+\cdots+c_{1}x+ c_{0}}{p}.$$ Here, $x$ divides $\bar {M}(x)$ if and only if $p^{2}|c_{0}$ or we can say that $x$ does not divide $\bar {M}(x)$ if and only if $p^{2}\nmid c_{0}.$ Thus, $p\nmid [\mathcal{O}_{K}:\mathbb{Z}[\theta]]$ if and only if $p^{2}\nmid c_{0}.$ This completes the proof.   
		\end{proof}

		\section{\bf Proofs of the main theorem}
		
		\begin{proof}[\bf{Proof of Theorem \ref{th1}}]
			We prove each part of the theorem separately.
			Consider the $\textbf{first part},$ when $p|a,$ $p|b,$ and $p|c,$ where $p$ is any prime. By substituting $c_{0}=c,$ $c_{1}=b,$ $c_{3}=a,$ and $c_{i}=0,$ for all remaining $i=2,~4,~5,~\ldots,(n-1)$ in Lemma (\ref{L5}), we have complete proof of the first part of the theorem directly.
			
			\indent	Now, consider the $\textbf{second part}$ when $p|a,$ $p|b,$ and $p\nmid c.$ By using Lemma (\ref {l1}), we get $p|n$ that means there exist two positive integers $r$ and $m$ such that $n=p^{r}m$ and $p\nmid m.$ Further, we have $$f(x)= x^{n}+ax^{3}+bx+c\equiv x^{n}+c \pmod{p},$$ i.e. $$\bar {f}(x)=x^{n}+\bar {c}=x^{p^{r}m}+\bar {c}\in \F_{p}[x].$$ 
			Since $p\nmid c$ implies that $\gcd (p, c)=1,$ and using Fermat's little theorem, we obtain $$c^{p^{r}} \equiv c \pmod{p}.$$ Thus, we have $$ {f}(x) \equiv (x^{m}+c)^{p^{r}} \pmod{p},$$ thanks to the binomial theorem.

			Let $\displaystyle\prod_{i=1}^{l_{1}} \bar G_{i}(x)$ be the factorization of $x^{m}+\bar {c}$ over the field $\F_{p},$ where $\bar G_{i}(x)$ are distinct monic irreducible polynomials and $G_{i}(x)$ are monic lifts,  respectively.
			Now,
			\begin{align}\label{eq508}
				f(x)&=x^{n}+ax^{3}+bx+c\nonumber\\
				&= x^{p^{r}m}+{c}+ax^{3}+bx.
			\end{align} 
			By using Lemma (\ref {l3}), we obtain 
			\begin{align}\label{eq50}
		f(x)=\bigg(\displaystyle\prod_{i=1}^{l_{1}} G_{i}(x)\bigg)^{p^{r}}+p\bigg(\displaystyle\prod_{i=1}^{l_{1}} G_{i}(x) \bigg)
				V_{1}(x)+p^{2}V_{2}(x)+(c+(-c)^{p^{r}})+ax^{3}+bx,
			\end{align}
			where polynomials $V_{1}(x)$ and $V_{2}(x)$ include the remaining terms.
			Define $M(x)$ as $$M(x)=\frac{1}{p}\bigg(f(x)-\bigg(\displaystyle\prod_{i=1}^{l_{1}}  G_{i}(x)\bigg)^{p^{r}}\bigg).$$ Substituting the value of $f(x)$ from (\ref {eq50}) in $M(x),$ we get $$\bar M(x)= \bigg(\displaystyle\prod_{i=1}^{l_{1}} \bar G_{i}(x)\bigg)\bar {V_{1}}(x)+\bar u_{2}x^{3}+\bar {u_1}x+\bar {u_0},$$ where $u_{2}=\frac {a}{p},$ $u_{1}=\frac {b}{p},$ and $u_{0}=\frac{(c+(-c)^{p^{r}})}{p}.$
			Let $\eta$ be a common zero of $\bar f(x)$ and $\bar M_{1}(x)$ in the algebraic closure of the field $\F_{p},$ where 
			\begin{align}
				\label{eq276}
				M_{1}(x)=\bar u_{2}x^{3}+\bar {u_1}x+\bar {u_0}.
			\end{align}	
			This results in the following two equations
			\begin{align}
				\label{eq127}
				\bar f(\eta)=\eta^{n}+\bar {c}=\bar 0
			\end{align}	
			and
			\begin{align}
				\label{eq128}
				\bar M_{1}(\eta)=\bar u_{2}\eta^{3}+\bar u_{1}\eta+\bar u_{0}=\bar 0.
			\end{align}		
			From equation (\ref {eq276}), we have the following cases:
			
			\textbf{Case 2.1:}	
			If $\bar u_{2}=\bar 0,$ $\bar u_{1}=\bar 0$ and $\bar u_{0}=\bar 0,$ then $\bar M_{1}(x)=\bar 0$ which implies that $\bar G_{i}(x)| \bar M_{1}(x),$ for all $i=1, 2, \ldots, l_{1}.$
			
			\textbf{Case 2.2:}
			If $\bar u_{2}\neq \bar 0,$ $\bar u_{1}= \bar 0,$ $\bar u_{0}=\bar 0$ or $\bar u_{2}= \bar 0,$ $\bar u_{1}\neq \bar 0,$ $\bar u_{0}=\bar 0,$ then from equation (\ref {eq128}), we have $\eta=\bar 0$ but $\bar f(\bar 0)\neq \bar 0$ (since $p\nmid c$).	
			If $\bar u_{2}=\bar 0,$ $\bar u_{1}= \bar 0,$ $\bar u_{0}\neq \bar 0,$ then $\bar M_{1}(x)=\bar u_{0}\neq \bar 0.$
			Consequently, $\bar f(x)$ and $\bar M_{1}(x)$ have no common zeros implying that $\bar G_{i}(x)\nmid \bar M_{1}(x),$ for all $i=1, 2, \ldots, l_{1}.$
			
			\textbf{Case 2.3:}	
			If $\bar u_{2}=\bar 0,$ $\bar u_{1}\neq \bar 0,$ $\bar u_{0}\neq \bar 0,$ then from equation (\ref {eq128}), we have $\bar u_{1}\eta+\bar u_{0}=\bar 0$ or $\eta=-(\bar u_{1})^{-1}\bar u_{0}.$ Now, using the value of $\eta$ in the equation (\ref {eq127}), we get $$(-(\bar u_{1})^{-1}\bar u_{0})^{n}+\bar {c}=\bar 0~~\text{or}~~(-\bar u_{0})^{n}+(\bar u_{1})^{n}\bar {c}=\bar 0.$$ Thus, $\bar f(x)$ and $\bar M_{1}(x)$ have no common zeros if and only if  $$(- u_{0})^{n}+(u_{1})^{n}{c}\not \equiv 0\pmod{p}$$ which is further equivalent to $\bar G_{i}(x)\nmid \bar M_{1}(x),$ for all $i=1, 2, \ldots, l_{1}.$
			
			\textbf{Case 2.4:}
			If $\bar u_{2}\neq \bar 0,$ $\bar u_{1}= \bar 0,$ $\bar u_{0}\neq \bar 0,$ then from equation (\ref {eq128}), we have 
			\begin{align}
				\label {eq129}
				\eta^{3}=-(\bar u_{2})^{-1}\bar u_{0}.
			\end{align}
		Since $n=3k,$ therefore by 
		substituting the value of $\eta^{3}$ in the equation (\ref {eq127}), we get $$(-(\bar u_{2})^{-1}\bar u_{0})^{k}+\bar {c}=\bar 0~~\text{ or}~~ \bar {c}(\bar u_{2})^{k}+(-\bar u_{0})^{k}=\bar 0.$$ Thus, $\bar f(x)$ and $\bar M_{1}(x)$ have no common zeros if and only if 
				$$
				 {c}( u_{2})^{\frac{n}{3}}+(- u_{0})^{\frac{n}{3}}\not\equiv 0\pmod{p}$$ which is further equivalent to $\bar G_{i}(x)\nmid \bar M_{1}(x),$ for all $i=1, 2, \ldots, l_{1}.$ 
			
			\textbf{Case 2.5:}
			If $\bar u_{2}\neq \bar 0,$ $\bar u_{1}\neq \bar 0,$ $\bar u_{0}=\bar 0,$ then from equation (\ref {eq128}), we have
			\begin{align}
				\label{eq131}
				\eta^{2}=-(\bar u_{2})^{-1}\bar u_{1}.
			\end{align}
			Let $n=2T_{1}+R_{1},$ where $T_{1}\in\N$ and $R_{1}\in \{0,~1\}.$ 
		Substituting the value of $\eta^{2}$ in the equation (\ref {eq127}), we get $$\eta^{R_{1}}(-(\bar u_{2})^{-1}\bar u_{1})^{T_{1}}+\bar {c}=\bar 0~~\text{ or}~~ \eta^{R_{1}}=-\bar {c}(\bar u_{2}(-\bar u_{1})^{-1})^{T_{1}}.$$ If $R_{1}=0,$ then $\bar {c}(\bar u_{2})^{T_{1}}+(-\bar u_{1})^{T_{1}}=\bar 0.$ If $R_{1}=1,$ then putting the value of $\eta$ in (\ref {eq131}), we have $$(\bar {c})^{2}(\bar u_{2})^{n}= (- \bar u_{1})^{n}.$$ Thus, $\bar f(x)$ and $\bar M_{1}(x)$ have no common zeros if and only if 
		$$\left\{\begin{array}{ll}
			{c}( u_{2})^{\frac{n}{2}}+(- u_{1})^{\frac{n}{2}}\not\equiv 0\pmod{p}, & {\rm if }\; 2|n \\
			{c}^{2}( u_{2})^{n}+( u_{1})^{n}\not\equiv 0\pmod{p}, & {\rm if}\;2\nmid n 
		\end{array}\right.$$ which is further equivalent to $\bar G_{i}(x)\nmid \bar M_{1}(x),$ for all $i=1, 2, \ldots, l_{1}.$ 
		
			\textbf{Case 2.6:}
			If $\bar u_{2}\neq \bar 0,$ $\bar u_{1}\neq \bar 0,$ $\bar u_{0}\neq \bar 0,$ then from equation (\ref {eq128}), we have 
			\begin{align}
				\label{eq132}
				\eta^{3}=-(\bar u_{2})^{-1}(\bar u_{1}\eta+\bar u_{0}).
			\end{align}
			By putting the value of $\eta^{3}$ in the equation (\ref {eq127}), we have $$[-(\bar u_{2})^{-1}(\bar u_{1}\eta+\bar u_{0})]^{\frac {n}{3}}+\bar {c}=\bar 0.$$ On solving the above equation, we have $$\eta=-(\bar u_{1})^{-1}[(-\bar {c})^{\frac {3}{n}}\bar u_{2}+\bar u_{0}].$$ Now, using the value of $\eta$ in the equation (\ref {eq132}), we obtain $$[(-\bar {c})^{\frac {3}{n}}\bar u_{2}+\bar u_{0}]^{3}+(\bar u_{1})^{3}(-\bar c)^{\frac {3}{n}}=\bar 0.$$ Thus, $\bar f(x)$ and $\bar M_{1}(x)$ have no common zeros if and only if $$[(-\bar {c})^{\frac {3}{n}}\bar u_{2}+\bar u_{0}]^{3}+(\bar u_{1})^{3}(-\bar c)^{\frac {3}{n}}\neq\bar 0$$ which is further equivalent to $\bar G_{i}(x)\nmid \bar M_{1}(x),$ for all $i=1, 2, \ldots, l_{1}.$ 
			
			It is easy to see that $\bar G_{i}(x)\nmid \bar M(x)$ if and only if $\bar G_{i}(x)\nmid \bar M_{1}(x),$ for all $i=1, 2, \ldots, l_{1}.$ Thus, by considering all the above cases collectively and using the Dedekind  criterion (\ref {T4}), we complete the proof of the second part.

			\indent	Now, we deal with the $\textbf{third part}$ when $p|a,$ $p|c,$ and $p\nmid b.$ Using Lemma (\ref {l1}), we have $p|(n-1)$ that means there exist two positive integers $s$ and $m_{1}$ such that $(n-1)=p^{s}m_{1}$ and $p^{s}||(n-1) .$ Further, we observe that $$f(x)= x^{n}+ax^{3}+bx+c\equiv x^{n}+bx \pmod{p},$$ i.e. $$\bar {f}(x)=x^{n}+\bar {b}x=x(x^{p^{s}m_{1}}+\bar {b})\in \F_{p}[x].$$ 
			In similar to the previous part, using binomial theorem along with Fermat's little theorem, we obtain $$ {f}(x) \equiv x(x^{m_{1}}+b)^{p^{s}} \pmod{p}.$$
			
			Let $\displaystyle\prod_{i=1}^{l_{2}} \bar H_{i}(x)$ be the factorization of $x^{m_{1}}+\bar {b}$ over the field $\F_{p},$ where $\bar H_{i}(x)$ are distinct monic irreducible polynomials and $H_{i}(x)$ are  respectively monic lifts.
			We write
			\begin{align}\label{eq10}
				f(x)&=x^{n}+ax^{3}+bx+c\nonumber\\
				&= x(x^{p^{s}m_{1}}+b)+ax^{3}+c.
			\end{align} 
			By using Lemma (\ref {l3}), we get 
			\begin{align}\label{eq11}
				f(x)=x\bigg(\displaystyle\prod_{i=1}^{l_{2}} H_{i}(x)\bigg)^{p^{s}}+px\bigg(\displaystyle\prod_{i=1}^{l_{s}} H_{i}(x) \bigg)
				W_{1}(x)+p^{2}xW_{2}(x)+(b+(-b)^{p^{s}})x+ax^{3}+c,
			\end{align}
			where polynomials $W_{1}(x)$ and $W_{2}(x)$ include the remaining terms.
			Define $M(x)$ as $$M(x)=\frac{1}{p}\bigg(f(x)-x\bigg(\displaystyle\prod_{i=1}^{l_{2}}  H_{i}(x)\bigg)^{p^{s}}\bigg).$$ Putting the value of $f(x)$ from (\ref {eq11}) in $M(x),$ we have $$\bar M(x)= x\bigg(\displaystyle\prod_{i=1}^{l_{2}} \bar H_{i}(x)\bigg)\bar {W_{1}}(x)+\bar v_{2}x^{3}+\bar {v_1}x+\bar {v_0},$$ where $v_{2}=\frac {a}{p},$ $v_{1}=\frac {b+(-b)^{p^{s}}}{p},$ and $v_{0}=\frac{c}{p}.$ Here it is clear that, if $p^{2}|c,$ then $x$ divides both $\bar f(x)$ and $\bar M(x).$ Therefore, for the upcoming cases, we take $p^{2}\nmid c$ i.e. $\bar {v_0}\neq \bar 0.$ 
			Let $\xi\neq \bar 0$ be a common zero of $\bar f(x)$ and $\bar M_{2}(x)$ in the algebraic closure of the field $\F_{p},$ where 
			\begin{align}
				\label{eq12}
				M_{2}(x)=\bar v_{2}x^{3}+\bar {v_1}x+\bar {v_0}.
			\end{align}	
			From this, we have two following equations
			\begin{align}
				\label{eq13}
				\bar f(\xi)=\xi(\xi^{n-1}+\bar {b})=\bar 0
				~~\text {or}~~ \xi^{n-1}+\bar {b}=\bar 0
				\end{align}	
			and
			\begin{align}
				\label{eq14}
				\bar M_{2}(\xi)=\bar v_{2}\xi^{3}+\bar v_{1}\xi+\bar v_{0}=\bar 0.
			\end{align}		
			From equation (\ref {eq12}), we have the following cases:

			\textbf{Case 3.1:}
				If $\bar v_{2}=\bar 0,$ $\bar v_{1}= \bar 0,$ $\bar v_{0}\neq \bar 0,$ then $\bar M_{2}(x)=\bar v_{0}\neq \bar 0.$
			Thus, $\bar f(x)$ and $\bar M_{2}(x)$ have no common zeros implying that $\bar H_{i}(x)\nmid \bar M_{2}(x),$ for all $i=1, 2, \ldots, l_{2}.$
			
			\textbf{Case 3.2:}	
			If $\bar v_{2}=\bar 0,$ $\bar v_{1}\neq \bar 0,$ $\bar v_{0}\neq \bar 0,$ then from equation (\ref {eq14}), we have $\bar v_{1}\xi+\bar v_{0}=\bar 0$ or $\xi=-(\bar v_{1})^{-1}\bar v_{0}.$ Now, substituting the value of $\xi$ in the equation (\ref {eq13}), we get $$(-(\bar v_{1})^{-1}\bar v_{0})^{n-1}+\bar {b}=\bar 0~~\text{or}~~(-\bar v_{0})^{n-1}+(\bar v_{1})^{n-1}\bar {b}=\bar 0.$$ Thus, $\bar f(x)$ and $\bar M_{2}(x)$ have no common zeros if and only if  $$(- v_{0})^{n-1}+(v_{1})^{n-1}{b}\not \equiv 0\pmod{p}$$ which is further equivalent to $\bar H_{i}(x)\nmid \bar M_{2}(x),$ for all $i=1, 2, \ldots, l_{2}.$
			
			\textbf{Case 3.3:}
			If $\bar v_{2}\neq \bar 0,$ $\bar v_{1}= \bar 0,$ $\bar v_{0}\neq \bar 0,$ then equation (\ref {eq14}) gives us 
			\begin{align}
				\label {eq15}
				\xi^{3}=-(\bar v_{2})^{-1}\bar v_{0}.
			\end{align}
			Since $n=3k,$ therefore by 
			substituting the value of $\xi^{3}$ in the equation (\ref {eq13}), we get $$(-(\bar v_{2})^{-1}\bar v_{0})^{k}+\bar {b}\xi=\bar 0~~\text{ or}~~\xi=-(\bar {b})^{-1} (-(\bar v_{2})^{-1}\bar v_{0})^{k} .$$ Again putting the value of $\xi$ in (\ref {eq15}), we obtain $(\bar v_{2})^{n-1}(\bar b)^{3}\bar v_{0}=(-\bar v_{0})^{n}.$ Thus, $\bar f(x)$ and $\bar M_{2}(x)$ have no common zeros if and only if 
			$$
			( v_{2})^{n-1} b^{3}+(-v_{0})^{n-1}\not\equiv 0\pmod{p}$$ which is further equivalent to $\bar H_{i}(x)\nmid \bar M_{2}(x),$ for all $i=1, 2, \ldots, l_{2}.$ 
			
			\textbf{Case 3.4:}
			If $\bar v_{2}\neq \bar 0,$ $\bar v_{1}\neq \bar 0,$ $\bar v_{0}\neq \bar 0,$ then from equation (\ref {eq14}), we have 
			\begin{align}
				\label{eq17}
				\xi^{3}=-(\bar v_{2})^{-1}(\bar v_{1}\xi+\bar v_{0}).
			\end{align}
			By putting the value of $\xi^{3}$ in the equation (\ref {eq13}), we have $$[-(\bar v_{2})^{-1}(\bar v_{1}\xi+\bar v_{0})]^{\frac {n-1}{3}}+\bar {b}=\bar 0.$$ On solving the above equation, we have $$\xi=-(\bar v_{1})^{-1}[(-\bar {b})^{\frac {3}{n-1}}\bar v_{2}+\bar v_{0}].$$ Now, using the value of $\xi$ in the equation (\ref {eq17}), we obtain $$[(-\bar {b})^{\frac {3}{n-1}}\bar v_{2}+\bar v_{0}]^{3}+(\bar v_{1})^{3}(-\bar b)^{\frac {3}{n-1}}=\bar 0.$$ Thus, $\bar f(x)$ and $\bar M_{2}(x)$ have no common zeros if and only if $$[(-\bar {b})^{\frac {3}{n-1}}\bar v_{2}+\bar v_{0}]^{3}+(\bar v_{1})^{3}(-\bar b)^{\frac {3}{n-1}}\neq\bar 0$$ which is further equivalent to $\bar H_{i}(x)\nmid \bar M_{2}(x),$ for all $i=1, 2, \ldots, l_{2}.$ 
			
			It is very simple to check that $\bar H_{i}(x)\nmid \bar M(x)$ if and only if $\bar H_{i}(x)\nmid \bar M_{2}(x),$ for all $i=1, 2, \ldots, l_{2}.$ Thus, by considering all the above cases together and using the Dedekind  criterion (\ref {T4}), we complete the proof of the third part.

			\indent	Consider the $\textbf{fourth part}$ when $p\nmid a,$ $p| b,$ $p| c.$ 
			From Lemma (\ref {l1}), we have $p|(n-3)$ (since $p| b$ and $p\nmid a$) which implies that there exist two positive integers $t$ and $m_{2}$ such that $(n-3)=p^{t}m_{2}$ and $p^{t}||(n-3).$ Now, we have $$f(x)= x^{n}+ax^{3}+bx+c\equiv x^{n}+ax^{3} \pmod{p},$$ i.e. $$\bar {f}(x)=x^{n}+\bar ax^{3}=x^{3}(x^{p^{t}m_{2}}+\bar {a})\in \F_{p}[x].$$ 
			Using Fermat's little theorem with the binomial theorem, we get $$ {f}(x) \equiv x^{3}(x^{m_{2}}+ {a})^{p^{t}} \pmod{p}.$$
			
			Let $\displaystyle\prod_{i=1}^{l_{3}} \bar h_{i}(x)$ be the factorization of $x^{m_{2}}+\bar {a}$ over the field $\F_{p},$ where $\bar h_{i}(x)$ are distinct monic irreducible polynomials and $h_{i}(x)$ are  respectively monic lifts.
			Also,
			\begin{align}\label{eq18}
				f(x)&=x^{n}+ax^{3}+bx+c\nonumber\\
				&= x^{3}(x^{p^{t}m_{2}}+a)+bx+c.
			\end{align} 
			By applying Lemma (\ref {l3}), we obtain 
			\begin{align}\label{eq19}
				f(x)=x^{3}\bigg(\displaystyle\prod_{i=1}^{l_{3}} h_{i}(x)\bigg)^{p^{t}}+px^{3}\bigg(\displaystyle\prod_{i=1}^{l_{3}} h_{i}(x) \bigg)
				A_{1}(x)+p^{2}x^{3}A_{2}(x)+(a+(-a)^{p^{t}})x^{3}+bx+c,
			\end{align}
			where the polynomials $A_{1}(x)$ and $A_{2}(x)$ contain the remaining terms.
			Define $M(x)$ as $$M(x)=\frac{1}{p}\bigg(f(x)-x^{3}\bigg(\displaystyle\prod_{i=1}^{l_{3}}  h_{i}(x)\bigg)^{p^{t}}\bigg).$$ On substituting the value of $f(x)$ from equation (\ref {eq19}) in $M(x),$ we get $$\bar M(x)= x^{3}\bigg(\displaystyle\prod_{i=1}^{l_{3}} \bar h_{i}(x)\bigg)\bar {A_{1}}(x)+\bar w_{2}x^{3}+\bar {w_1}x+\bar {w_0},$$ where $w_{2}=\frac {a+(-a)^{p^{t}}}{p},$ $w_{1}=\frac {b}{p},$ and $w_{0}=\frac{c}{p}.$ It is easy to verify that if $p^{2}|c,$ then $x$ divides both $\bar f(x)$ and $\bar M(x).$ Therefore, we take $p^{2}\nmid c$ for the coming cases i.e. $\bar {w_0}\neq \bar 0.$ 
			Let $\alpha\neq \bar 0$ be a common zero of $\bar f(x)$ and $\bar M_{3}(x)$ in the algebraic closure of the field $\F_{p},$ where 
			\begin{align}
				\label{eq20}
				M_{3}(x)=\bar w_{2}x^{3}+\bar {w_1}x+\bar {w_0}.
			\end{align}	
			From this, we have following equations
			\begin{align}
				\label{eq21}
				\bar f(\alpha)=\alpha^{3}(\alpha^{n-3}+\bar {a})=\bar 0
				~~\text {or}~~ \alpha^{n-3}+\bar {a}=\bar 0
			\end{align}	
			and
			\begin{align}
				\label{eq22}
				\bar M_{3}(\alpha)=\bar w_{2}\alpha^{3}+\bar w_{1}\alpha+\bar w_{0}=\bar 0.
			\end{align}		
			From equation (\ref {eq20}), we have the following cases:

			\textbf{Case 4.1:}
			If $\bar w_{2}=\bar 0,$ $\bar w_{1}= \bar 0,$ $\bar w_{0}\neq \bar 0,$ then $\bar M_{3}(x)=\bar w_{0}\neq \bar 0.$
			Thus, $\bar f(x)$ and $\bar M_{3}(x)$ have no common zeros which implies that $\bar h_{i}(x)\nmid \bar M_{3}(x),$ for all $i=1, 2, \ldots, l_{3}.$
			
			\textbf{Case 4.2:}	
			If $\bar w_{2}=\bar 0,$ $\bar w_{1}\neq \bar 0,$ $\bar w_{0}\neq \bar 0,$ then from equation (\ref {eq22}), we have $\bar w_{1}\alpha+\bar w_{0}=\bar 0$ or $\alpha=-(\bar w_{1})^{-1}\bar w_{0}.$ Now, substituting the value of $\alpha$ in the equation (\ref {eq21}), we get $$(-(\bar w_{1})^{-1}\bar w_{0})^{n-3}+\bar {a}=\bar 0~~\text{or}~~(-\bar w_{0})^{n-3}+(\bar w_{1})^{n-3}\bar {a}=\bar 0.$$ Thus, $\bar f(x)$ and $\bar M_{3}(x)$ have no common zeros if and only if  $$(- w_{0})^{n-3}+(w_{1})^{n-3}{a}\not \equiv 0\pmod{p}$$ which is further equivalent to $\bar h_{i}(x)\nmid \bar M_{3}(x),$ for all $i=1, 2, \ldots, l_{3}.$
			
			\textbf{Case 4.3:}
			If $\bar w_{2}\neq \bar 0,$ $\bar w_{1}= \bar 0,$ $\bar w_{0}\neq \bar 0,$ then from equation (\ref {eq22}), we have 
			\begin{align}
				\label {eq23}
				\alpha^{3}=-(\bar w_{2})^{-1}\bar w_{0}.
			\end{align}
			Since $n=3k,$ therefore by 
			substituting the value of $\alpha^{3}$ in the equation (\ref {eq21}), we get $$(-(\bar w_{2})^{-1}\bar w_{0})^{k-1}+\bar {a}=\bar 0 $$ or $$(-\bar w_{0})^{k-1}+\bar {a}(\bar w_{2})^{k-1}=\bar 0 .$$ Thus, $\bar f(x)$ and $\bar M_{3}(x)$ have no common zeros if and only if 
			$$
			(- w_{0})^{\frac {n-3}{3}}+ {a}(w_{2})^{\frac {n-3}{3}}\not\equiv 0\pmod{p}$$ which is further equivalent to $\bar h_{i}(x)\nmid \bar M_{3}(x),$ for all $i=1, 2, \ldots, l_{3}.$ 
			
			\textbf{Case 4.4:}
			If $\bar w_{2}\neq \bar 0,$ $\bar w_{1}\neq \bar 0,$ $\bar w_{0}\neq \bar 0,$ then from equation (\ref {eq22}), we have 
			\begin{align}
				\label{eq24}
				\alpha^{3}=-(\bar w_{2})^{-1}(\bar w_{1}\alpha+\bar w_{0}).
			\end{align}
			By putting the value of $\alpha^{3}$ in the equation (\ref {eq21}), we have $$[-(\bar w_{2})^{-1}(\bar w_{1}\alpha+\bar w_{0})]^{\frac {n-3}{3}}+\bar {a}=\bar 0.$$ On solving the above equation, we get $$\alpha=-(\bar w_{1})^{-1}[(-\bar {a})^{\frac {3}{n-3}}\bar w_{2}+\bar w_{0}].$$ Now, using the value of $\alpha$ in the equation (\ref {eq24}), we obtain $$[(-\bar {a})^{\frac {3}{n-3}}\bar w_{2}+\bar w_{0}]^{3}+(\bar w_{1})^{3}(-\bar a)^{\frac {3}{n-3}}=\bar 0.$$ Thus, $\bar f(x)$ and $\bar M_{3}(x)$ have no common zeros if and only if $$[(-\bar {a})^{\frac {3}{n-3}}\bar w_{2}+\bar w_{0}]^{3}+(\bar w_{1})^{3}(-\bar a)^{\frac {3}{n-3}}\neq\bar 0$$ which is further equivalent to $\bar h_{i}(x)\nmid \bar M_{3}(x),$ for all $i=1, 2, \ldots, l_{3}.$ 
			
			It is easy to verify that $\bar h_{i}(x)\nmid \bar M(x)$ if and only if $\bar h_{i}(x)\nmid \bar M_{3}(x),$ for all $i=1, 2, \ldots, l_{3}.$ Thus, by considering all the above cases together and using the Dedekind  criterion (\ref {T4}), we complete the proof of the fourth part.
			
			\indent	Now consider the $\textbf{fifth part}$ when $p| b$ and $p\nmid ac.$ 
			From the second part of Lemma (\ref {l2}), we get $p=3.$ Now, $$f(x)= x^{n}+ax^{3}+bx+c\equiv x^{n}+ax^{3}+c \pmod{3},$$ i.e.
			
			\begin{equation}\label{eq27}
			 \bar {f}(x)=x^{n}+\bar ax^{3}+\bar c\in \F_{3} [x].
			\end{equation}
			Also according to the hypothesis $3|n$ and let $n=3k.$ Now, using Fermat's little theorem with the binomial theorem, we get $$ {f}(x) \equiv(x^{k}+ {a}x+c)^{3} \pmod{3}.$$
			
			Let $\displaystyle\prod_{i=1}^{l_{4}} \bar F_{i}(x)$ be the factorization of $x^{k}+ \bar{a}x+\bar c$ over the field $\F_{3},$ where $\bar F_{i}(x)$ are monic irreducible polynomials and $F_{i}(x)$ are  respectively monic lifts.
			Now, following the same steps of Lemma (\ref {l3}), we obtain 
			
\begin{equation}\label{eq25}
			\begin{split}
				f(x)&=\bigg(\displaystyle\prod_{i=1}^{l_{4}} F_{i}(x)\bigg)^{3}+3\bigg(\displaystyle\prod_{i=1}^{l_{4}} F_{i}(x) \bigg)
				B_{1}(x)+3^{2}C_{1}(x)+(a-a^{3})x^{3}\\&~~~~~~~~~~~~~~~~~~~~~~~~~~~~~~~~~~~~~~~~~~~~~~~~~~~~~~~~~-3a^{2}cx^{2}+(b-3ac^{2})x+(c-c^{3}) 
				\end{split}
			\end{equation}
			  	and
			  	\begin{equation}\label{eq28}
			  		\bar {f}(x)=\bigg(\displaystyle\prod_{i=1}^{l_{4}} \bar F_{i}(x)\bigg)^{3}\in \F_{3}[x].
			  	\end{equation}
			Define $M(x)$ as $$M(x)=\frac{1}{3}\bigg(f(x)-\bigg(\displaystyle\prod_{i=1}^{l_{4}}  F_{i}(x)\bigg)^{3}\bigg).$$ As $n=3k,$ therefore using the second part of Lemma (\ref {l2}), we have $k=3k_{1}+R$ and $R\in \{1,~2\}.$ On substituting $a=\frac {n}{n-3}c,$ $b=\frac {n}{n-1}c$ (\ref {l1}), and the value of $f(x)$ from equation (\ref {eq25}) in $M(x),$ we have
			\begin{align}\label{eq26}
				\begin{split}
			\bar M(x)&= \bigg(\displaystyle\prod_{i=1}^{l_{4}} \bar F_{i}(x)\bigg)\bar {B_{1}}(x)+\overline {\bigg(\frac {c}{(3k-1)(k-1)}\bigg)}\bigg[\overline {k(3k-1)} \overline {\bigg(\frac {1-a^{2}}{3}\bigg)}x^{3}\\&-\overline {a^{2}(3k-1)(k-1)}x^{2}+\overline {(k-1)(k-ac(n-1))}x+\overline {(3k-1)(k-1)}\overline {\bigg(\frac {c^{2}-1}{3}\bigg)}~\bigg]\\&= \bigg(\displaystyle\prod_{i=1}^{l_{4}} \bar F_{i}(x)\bigg)\bar {B_{1}}(x)+\overline {\bigg(\frac {c}{(3k-1)(k-1)}\bigg)}\bigg[~~ \overline {R\bigg(\frac {a^{2}-1}{3}\bigg)}x^{3}+ {(R-1)}x^{2}\\&~~~~~~~~~~~~~~~~~~~~~~~~~~~~~~~~~~~~~~~~~~~~~~~~~~~~~~~~~~~+(R-1)\overline {(R+ac)}x+(R-1)\overline {\bigg(\frac {c^{2}-1}{3}\bigg)}~~\bigg],
				\end{split}
		\end{align}
	(since Fermat's theorem gives us $a^{2}\equiv 1\pmod{3}$).
	Let $\beta$ be the common zero of $\bar f(x)$ and $\bar M(x)$ in the algebraic closure of the field $\F_{3}.$
		Now, we have two different cases according to the values of $R.$\\
		
		\textbf{Case 5.1:} Let $R=1.$ As $\beta$ is a zero of $\bar f(x),$ it satisfies some of the $\bar F_{i}(x)$ over the field $\F_{3}.$ Due to this, from the equations (\ref {eq27}) and (\ref {eq26}), we get 
			\begin{equation}\label{eq29}
				\bar {f}(\beta)=\beta^{n}+\bar a\beta^{3}+\bar c=\bar 0
			\end{equation}
			and
			\begin{equation}\label{eq30}
					\bar M(\beta)=\overline {\bigg(\frac {c}{(3k-1)(k-1)}\bigg)}\bigg[~~ \overline {\bigg(\frac {a^{2}-1}{3}\bigg)}\beta^{3}\bigg]=\bar 0~~\text {or}~~\overline {\bigg(\frac {a^{2}-1}{3}\bigg)}\beta^{3}=\bar 0.
				\end{equation}
		Now, we have two subcases due to equation (\ref {eq30}), which are as follows:
		
		\textbf{Subcase 5.1.1:} Let $9|(a^{2}-1).$ Then $$\bar M(x)= \bigg(\displaystyle\prod_{i=1}^{l_{4}} \bar F_{i}(x)\bigg)\bar {B_{1}}(x).$$ Thus, each $\bar F_{i}(x)|\bar M(x).$ Hence, by using the Dedekind  criterion (\ref {T4}), we have $3|[\mathcal{O}_{K}:\mathbb{Z}[\theta]]$ and conversely, if $3|[\mathcal{O}_{K}:\mathbb{Z}[\theta]],$ then $\bar F_{i}(x)|\bar M(x),$ for some $i.$

				\textbf{Subcase 5.1.2:} Let $9\nmid (a^{2}-1).$ Then, from equation (\ref {eq30}), we get $\beta=\bar 0$ but it is not possible because $\bar f(\bar 0)=\bar c\neq \bar 0.$ Thus, $\bar f(x)$ and $\bar M(x)$ have no common zeros
			i.e. $\bar F_{i}\nmid \bar M(x),$ for all $i=1, 2, \ldots, l_{4}.$ Therefore, by applying Dedekind  criterion (\ref {T4}), we have $3\nmid[\mathcal{O}_{K}:\mathbb{Z}[\theta]]$ and conversely, if $3\nmid [\mathcal{O}_{K}:\mathbb{Z}[\theta]],$ then $\bar F_{i}(x)\nmid \bar M(x),$ for all $i.$
			
			\textbf{Case 5.2:} Let $R=2.$ Then, from equation (\ref {eq26}), we obtain 
			\begin{align}\label{eq33}
				\begin{split}
					\bar M(x)&= \bigg(\displaystyle\prod_{i=1}^{l_{4}} \bar F_{i}(x)\bigg)\bar {B_{1}}(x)+\overline {\bigg(\frac {c}{(3k-1)(k-1)}\bigg)}\bigg[~~ \overline {2\bigg(\frac {a^{2}-1}{3}\bigg)}x^{3}+ x^{2}\\&~~~~~~~~~~~~~~~~~~~~~~~~~~~~~~~~~~~~~~~~~~~~~~~~~~~~~~~~~~~~~~~~~~~~~~~~~~~~~~+\overline {(2+ac)}x+\overline {\bigg(\frac {c^{2}-1}{3}\bigg)}~~\bigg]\\&=\bigg(\displaystyle\prod_{i=1}^{l_{4}} \bar F_{i}(x)\bigg)\bar {B_{1}}(x)+\overline {\bigg(\frac {c}{(3k-1)(k-1)}\bigg)}M_{4}(x)~ (\text {say}).
				\end{split}
			\end{align}
		From this, it is clear that $\bar f(x)$ and $\bar M(x)$ have a common zero in the algebraic closure of the field $\F_{3}$ if and only if $\bar f(x)$ and $\bar M_{4}(x)$ do as well. Thus, $\bar f(x)$ and $\bar M(x)$ have no common zeros if and only if $$\bigg[~~ \overline {2\bigg(\frac {a^{2}-1}{3}\bigg)}x^{3}+ x^{2}+\overline {(2+ac)}x+\overline {\bigg(\frac {c^{2}-1}{3}\bigg)}\bigg]$$ is co-prime to $\bar f(x).$ By considering the above cases (\textbf{5.1}) and (\textbf{5.2}) collectively and using the Dedekind  criterion (\ref {T4}), we complete the proof of the fifth part.

		\indent	Now, we consider the $\textbf{final part}$ when $p\nmid abc.$ If $p\nmid abc,$ from Lemma (\ref {l1}), we have $p\nmid n(n-1)(n-3)$ because of $nc=(n-3)a=(n-1)b$ that means $p\neq 2,~3$ (since $p\nmid n(n-1)$ and $3|n$ according to our hypothesis).  
		Let $p$ be an odd prime. Now, there are two possibilities that $\bar f(x)$ has repeated zeros or not. Assume $\zeta$ is a repeated zero of $\bar f(x).$ Then, $\bar f(\zeta)=\bar f'(\zeta)=\bar 0,$ where
		\begin{align}
			\label{eq34}
			\bar f(\zeta)=\zeta^{n}+\bar {a}\zeta^{3}+\bar b \zeta+\bar {c}=\bar 0
		\end{align}
		and
		\begin{align}
			\label{eq35}
			\bar f'(\zeta)=\bar {n}\zeta^{n-1}+3\bar a \zeta^{2}+\bar {b}=\bar 0.
		\end{align}
		From equation (\ref {eq35}), we have 
		\begin{align}
			\label{eq350}
			\zeta^{n-1}=-(\bar {n})^{-1}(3\bar a \zeta^{2}+\bar {b}).
		\end{align}
		By substituting the value of $\zeta^{n-1}$ in the equation (\ref {eq34}), we get 
		\begin{align}
			\label{eq36}
			&~~~~~~~~~~~\zeta [-(\bar {n})^{-1}(3\bar a \zeta^{2}+\bar {b})]+\bar {a}\zeta^{3}+\bar b \zeta+\bar {c}=\bar 0\nonumber  \\&~~\text {i.e.}~~-3\bar a \zeta^{3}-\bar {b}\zeta+\bar {n}\bar {a}\zeta^{3}+\bar {n}\bar b \zeta+\bar {n}\bar {c}=\bar 0\nonumber \\&~~\text {i.e.}~~~~~~~(\bar {n}-3)\bar {a} \zeta^{3}+(\bar {n}-1)\bar b\zeta+\bar {n}\bar {c}=\bar 0.
		\end{align}
		Applying Lemma (\ref {l1}) on the equation (\ref {eq36}), we have 
		\begin{align}
			\label{eq37}
		\bar {n}\bar {c}(\zeta^{3}+\zeta+1)=\bar 0~~\text {or}~~(\zeta^{3}+\zeta+1)=\bar 0,
		\end{align}
		because $p\nmid nc.$ Thus, $\bar f(x)$ has no repeated zeros if $(x^{3}+x+1)$ is co-prime to $\bar f(x).$

		If $(x^{3}+x+1)$ is not co-prime to $\bar f(x),$
		 then $\bar f(x)$ may have repeated zeros. Let $\bar f(x)=\displaystyle\prod_{i=1}^{l_{5}} (\bar q_{i}(x))^{e_{i}},$ where  $\bar q_{i}(x)$ be the distinct monic irreducible polynomial factors of $\bar f(x)$ over the field $\F_{p}$ and $q_{i}(x)$ are  respectively monic lifts, for all $i\in\{1,~2,~\ldots,l_{5}\}.$
		Define $M(x)$ as $$M(x)=\frac{1}{p}\bigg(f(x)-\displaystyle\prod_{i=1}^{l_{5}}  q_{i}(x)^{e_{i}}\bigg).$$

		If $\bar f(x)$ has no repeated zeros, then we are done.
		Let $\zeta$ is a repeated zero of $\bar f(x),$ then $\zeta$ is a zero of $(x^{3}+x+1).$ Thus, if $(x^{3}+x+1)$ is co-prime to $\bar M(x),$ then $\bar q_{i}(x)\nmid \bar M(x),$ where $\bar q_{i}(\zeta)=\bar 0.$ If $(x^{3}+x+1)$ is not co-prime to $\bar M(x),$ then $\bar M(\zeta)\neq \bar 0$ if and only if $\bar q_{i}(x)\nmid \bar M(x).$
		 Hence, by using (Theorem 6.1.4, \cite{HC}), we have complete proof of the final part.
		
	Also, the first part of the Lemma (\ref {l2}) implies that there does not exist any prime $p$ which satisfies the given hypothesis along with the conditions either $p|a$ and $p\nmid bc$ or $p|c$ and $p\nmid ab.$
		
			This completes the proof of the theorem.

		\end{proof}
			\begin{proof}[\bf{Proof of corollary \ref{C1}}]
		The proof of the corollary follows from the theorem (\ref {th1}). Indeed, if each prime $p$ divides $D_{f}$ and satisfies one of the following conditions from (1) to (6) of Theorem (\ref {th1}), then $p\nmid[\mathcal{O}_{K}:\mathbb{Z}[\theta]].$ Therefore, using the formula $$D_{f}= [\mathcal{O}_{K}:\mathbb{Z}[\theta]]^{2}D_{K},$$ we have $[\mathcal{O}_{K}:\mathbb{Z}[\theta]]=1$ implying that $\mathcal{O}_{K}=\mathbb{Z}[\theta].$ The converse of the corollary holds directly from Theorem (\ref {th1}). This completes the proof. 
	\end{proof}
	
		\begin{proof}[\bf{Proof of theorem \ref{th2}}]
		Let	\begin{align}\label{eq3000}
			\frac{d^ny}{dx^n} + a \frac{d^3y}{dx^3}+b\frac{dy}{dx} + c y = 0
		\end{align}
		be a differential equation with integer coefficients, where $\frac{a}{a-c}=k\in\N$ such that $n=3k>4,$ and $2ab=3ac-bc.$ Let $ \phi (z)=z^{n}+az^{3}+bz+c$ be the corresponding auxiliary irreducible polynomial of (\ref{eq3000}) and let $\theta$ be a root of $\phi (z).$ Indeed, if each prime $p$ divides $D_{\phi}$ and satisfies one of the following conditions from (1) to (6) of Theorem (\ref {th1}), then by using the formula $D_{f}= [\mathcal{O}_{K}:\mathbb{Z}[\theta]]^{2}D_{K},$ we have $\mathcal{O}_{K}=\Z[\theta],$ where $\mathcal{O}_{K}$ is the ring of integers in the algebraic number field $K=\Q(\theta).$ Also, $$\Z[\theta]=\{a_{0}+a_{1}\theta+a_{2}\theta^{2}+\ldots+a_{n-1}\theta^{n-1}~|~a_{i_{1}} \in \Z,~\text{for all}~~ i_{1}=0,~1,~\ldots,n-1\}.$$ Thus, all the roots of $\phi(z)=0,$ must be of the form $a_{0}^{(i)}+a_{1}^{(i)}\theta+a_{2}^{(i)}\theta^{2}+\ldots+a_{n-1}^{(i)}\theta^{n-1}=s_{i}$ (say), where $a_{j-1}^{(i)}$ are integers for all $i,~j=1,~2,\ldots,n.$ Hence, the general solution of the given differential equation $(\ref {eq300})$ is of the form
		\begin{align}
			\label{eq3010}
			y(x)&=
			\sum_{i=1}^{n}c_{i}\displaystyle\prod_{j=1}^{n} e^{a_{j-1}^{(i)}\theta^{j-1}x},
		\end{align}
		where $c_{i}$ are arbitrary real constants for all $i\in \{1,~2,~\ldots,~n\}.$ This completes the proof.
	\end{proof}

		\section{\bf Examples}	 
		In this section, we provide some examples that demonstrate the outcomes of our theorem. In the following examples, $K=\mathbb{Q}(\theta)$ be an algebraic number field corresponding to the algebraic integer $\theta$ with minimal polynomial $f(x)$ and $\mathcal{O}_{K}$ denotes the ring of algebraic integers of the number field $K.$
		
		\begin{example}
			Let $f(z)= z^{12}+44z^3+36z+33$ be a polynomial corresponding to the auxiliary equation of 
			\begin{align}\label{eq30000}
				\frac{d^{12}y}{dx^{12}} + 44\frac{d^3y}{dx^3}+36\frac{dy}{dx} + 33 y = 0
			\end{align}
			and let $\theta$ be a zero of it. Then, we have $D_{f}=-2^{24}.3^{24}.11^{11}.29.37.$ Here $2|a,$ $2|b,$ and $2\nmid c,$ where $a=44, b=36, c=33.$ Also, $2|u_{2},$ $2|u_{1},$ and $2\nmid u_{0},$ where $u_{0}=\frac{c+(-c)^{4}}{2},$ $u_{1}=\frac{b}{2},$ $u_{2}=\frac{a}{2},$ therefore using the section (i) of the part (2) of Theorem (\ref {th1}), we have $2\nmid  [\mathcal{O}_{K}:\mathbb{Z}[\theta]].$ Similarly, $3\nmid a,$ $3|b,$ $3| c,$ and $3|w_{2},~3|w_{1},~~3\nmid w_{0},$ $w_{0}=\frac{c}{3},$ $w_{1}=\frac{b}{3},$ $w_{2}=\frac{a+(-a)^{9}}{3},$ therefore using the section (i) of the part (4) of Theorem (\ref {th1}), we have $3\nmid  [\mathcal{O}_{K}:\mathbb{Z}[\theta]].$ Again, $11| a,$ $11\nmid b,$ $11| c,$ and $11\nmid v_{2}v_{1}v_{0},$  where $v_{0}=\frac{c}{11},$ $v_{1}=\frac{b+(-b)^{11}}{11},$ $v_{2}=\frac{a}{11},$ so by using the section (iii) of the part (3) of Theorem (\ref {th1}), we have $11\nmid  [\mathcal{O}_{K}:\mathbb{Z}[\theta]].$ Since $D_{f}= [\mathcal{O}_{K}:\mathbb{Z}[\theta]]^{2}D_{K},$ therefore by applying Theorem (\ref {th2}), the solution of the differential equation (\ref {eq30000}) is of the form
			\begin{align*}
				y(x)&=
				\sum_{i=1}^{12}c_{i}\displaystyle\prod_{j=1}^{12} e^{a_{j-1}^{(i)}\theta^{j-1}x},
			\end{align*}
			where $a_{j-1}^{(i)}$ are integers and $c_{i}$ are arbitrary real constants, for all $i,~j\in \{1,~2,~\ldots,~12\}.$
			
		\end{example}
		
		\begin{example}
			Let $f(x)= x^{9}+12x^3+9x+8$ be the minimal polynomial of the algebraic integer $\theta$ over the field $\mathbb{Q}.$ Then, we have $D_{f}=2^{24}.3^{19}.47.$ Here $2|a,$ $2|c,$ and $2\nmid b,$ where $a=12, b=9, c=8.$ Since, $2| v_{0}~ (v_{0}=\frac {c}{2})$ which implies that none of the sections of the part (3) of Theorem (\ref {th1}) are satisfies, therefore $2| [\mathcal{O}_{K}:\mathbb{Z}[\theta]].$ Thus, $K$ is not a monogenic field with respect to $\theta$.
		\end{example}
	\section{Acknowledgement} Karishan Kumar extends his gratitude to the CSIR fellowship for partial support under the file no: 
	09/1005(16567)/2023-EMR-I.


\begin{thebibliography}{10}
			\bibitem{CK1}
			T. Chatterjee and K. Kumar, \emph{On characterization of prime divisors of the index of a quadrinomial}, submitted.
			\bibitem{CK2}
			T. Chatterjee and K. Kumar, \emph{Characterization of certain monogenic number fields with an application to differential Equation	}, preprint.	
			\bibitem{HC}
			H. Cohen, 
			\emph{A Course in
				Computational
				Algebraic
				Number Theory}, Springer, Berlin Heidelberg 1993.
			\bibitem{RD}
			R. Dedekind, \"{U}ber den Zusammenhang zwischen der Theorie
			der Ideale und der Theorie der höheren \emph{Kongruenzen. G\"{o}tttingen Abh.} \textbf{23}, $1-23 (1878).$
			\bibitem{le}
			L. E. Fadil, \emph{On non monogenity of certain number fields defined by trinomials $x^{6}+ax^{3}+b,$} \emph{Journal of Number Theory} \textbf{239}, $489-500 (2022).$
		\bibitem{jss}
		A. Jakhar, S. K. Khanduja and N. Sangwan, \emph{On prime divisors of the index of an algebraic integer}, \emph{Journal of Number Theory} \textbf{166}, $47-61 (2016).$
		
			\bibitem{LJ}
			L. Jones, \emph{On necessary and sufficient conditions for the monogenity of a certain class of polynomials. Math. Slovaca} \textbf{72(3)}, $591-600 (2022).$
			
		\end{thebibliography}
	\end{document}